\address{\newline{\normalsize Moscow Institute of Physics and Technology, Institutskij pereulok, 9, Moskovskaya oblast', Dolgoprudnyi, Russia}
\newline{\it E-mail address}: karzhemanov.iv@mipt.ru}
\makeatletter\@addtoreset{equation}{section}\makeatother
\renewcommand{\theequation}{\thesection.\arabic{equation}}
\renewcommand{\thesubsection}{\bf\thesection.\arabic{equation}}
\makeatletter\@addtoreset{subsection}{equation}\makeatother
\newtheorem{theorem}[equation]{Theorem}
\newtheorem{prop}[equation]{Proposition}
\newtheorem{lemma}[equation]{Lemma}
\newtheorem{cor}[equation]{Corollary}
\newtheorem{conj}[equation]{Conjecture}
\theoremstyle{remark}
\newtheorem{remark}[equation]{Remark}
\newtheorem*{notation}{Conventions}
\theoremstyle{definition}
\newtheorem{ex}[equation]{Example}
\newtheorem{question}{Question}
\newcommand{\com}{\mathbb{C}}
\newcommand{\ra}{\mathbb{Q}}
\newcommand{\f}{\mathbb{F}}
\newcommand{\aut}{\text{Aut}}
\newcommand{\p}{\mathbb{P}}
\newcommand{\map}{\longrightarrow}
\newcommand{\cel}{\mathbb{Z}}
\thanks{{\it MS 2010 classification}: 14E08, 14M20, 14M27}
\thanks{{\it Key words}: rational variety, uniform rationality, compactification}
\title{Around the uniform rationality I}
\author{Ilya Karzhemanov}
\begin{document}

\begin{abstract}
We prove that there exist rational but not uniformly rational
smooth algebraic varieties. The proof is based on computing
certain numerical obstruction developed in the case of
compactifications of affine spaces. We show that for some
particular compactifications this obstruction behaves differently
compared to the uniformly rational situation.
\end{abstract}

\maketitle

\bigskip

\section{Introduction}
\label{section:intro}

\refstepcounter{equation}
\subsection{}
\label{subsection:intro-0}

Let $X$ be a complex projective manifold of dimension $n\ge 2$.
Recall that rationality of $X$ (i.\,e. the existence of a
birational map $X\dashrightarrow\p^n$) yields a Zariski open
subset $U \subset X$ isomorphic (as an affine scheme) to a domain
in $\com^n$. Any rational $X$ obviously carries a family of
(\emph{very free} in the terminology of e.\,g.
\cite{kollar-rat-curves}) rational curves and one may try to
obtain a family of holomorphic maps $\com\map X$ (called
\emph{sprays} in \cite{gromov-oka-pri}) such that near each of its
points $X$ is \emph{(algebraically) h\,-\,Runge} (see
\cite{gromov-oka-pri} for precise definitions and results). The
main expectation is that Zariski locally near \emph{every} point
$X$ should actually look like as an open subset
$U\subseteq\com^n$. One refers to the latter property as
\emph{uniform rationality} (of $X$), the notion introduced
recently in \cite{bog-boh} (following \cite{gromov-oka-pri}),
where some examples and basic properties of uniformly rational (or
u.\,r. for short) manifolds have been established. The ultimate
goal was to approach the following:

\begin{question}[{cf. \cite{bog-boh}, \cite{gromov-oka-pri}}]
\label{theorem:misha-ur} Is it true that every rational manifold
is uniformly rational?
\end{question}

Note that spherical (e.\,g. toric) varieties and blowups of u.\,r.
varieties at smooth centers are easily seen to be u.\,r. (all this
is contained in \cite{bog-boh}, together with examples of $X$
being the intersection of two quadrics, small resolution of a
singular cubic threefold, and some other instances).\footnote{~We
do not treat here two very interesting questions (both discussed
in \cite{bog-boh}) on \emph{rectifiability} of divisorial families
and on the local regularization of an arbitrary birational map
$X\dashrightarrow\p^n$. Instead we rather concentrate on the
``\,negative side\,'' of Question~\ref{theorem:misha-ur} (see
below). In addition, recall that small resolution of a Lefschetz
cubic is (Moishezon and) not u.\,r., which shows that projectivity
assumption on $X$ is crucial for Question~\ref{theorem:misha-ur}
to be of any content.} This immediately gives positive answer to
Question~\ref{theorem:misha-ur} in the case when $n = 2$. It is
also easy to see that all points on a rational manifold $X$ which
may not admit affine neighborhoods $U\subseteq\com^n$ form a locus
of codimension $\ge 2$ (compare with
Proposition~\ref{theorem:prop-on-mults} below).

The goal of the present paper is to prove the following:

\begin{theorem}
\label{theorem:main} In the previous notation, there exists
rational, but not uniformly rational $X$, whenever $n$ is at least
$4$.
\end{theorem}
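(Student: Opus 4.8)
The plan is to realize the desired $X$ as a smooth projective compactification of affine space, i.e. a smooth projective $X\supset\com^n$ with boundary $X\setminus\com^n=D$ a (not necessarily irreducible) divisor. Any such $X$ is automatically rational, so the entire content will be to arrange that $X$ fails to be uniformly rational at a suitable point. Since $\com^n$ is affine, $\mathrm{Supp}(D)$ carries an ample divisor and $K_X$ is supported on $D$ (because $K_X$ is trivial on $\com^n$); this is the rigid numerical bookkeeping on which the whole obstruction rests. By the codimension statement recorded just before Proposition~\ref{theorem:prop-on-mults}, the only points that can spoil uniform rationality lie in a closed subset of codimension $\ge 2$, so I would single out one point $p$ (or a small subvariety) in the boundary region and concentrate all of the work there.

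Next I would attach to a point $p\in X$ a numerical invariant $\mu(X,p)$ of local-to-global type: concretely, the minimal multiplicity with which a rational curve coming from a covering family through $p$ — equivalently, a spray $\com\map X$ in the sense of the Introduction — can meet the boundary data encoded by $D$ and $K_X$. The purpose of Proposition~\ref{theorem:prop-on-mults} is exactly to show that $\mu$ is unchanged under passing to an open subset and that, on a genuine chart $V\subseteq\com^n$, it takes the ``flat'' value forced by the triviality of $K_{\com^n}$, namely $\mu=1$ (in the model $\p^n$ a line meets the hyperplane at infinity transversally, and this is the minimal behaviour for any compactification). Hence, if $p$ admitted an open neighbourhood $U\cong V\subseteq\com^n$, one would necessarily obtain $\mu(X,p)=1$; this is the precise sense in which the obstruction is required to ``behave differently'' in the uniformly rational situation.

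With the invariant in hand I would exhibit an explicit compactification $X$, built so that $D$ is tangent to high order to every rational curve through a chosen point $p$ of the codimension $\ge 2$ locus, and then compute $\mu(X,p)\ge 2$ directly from the defining equations. This contradicts $\mu(X,p)=1$ and shows that $p$ admits no chart into $\com^n$, so that $X$ is rational but not uniformly rational, answering Question~\ref{theorem:misha-ur} in the negative. The constraint $n\ge 4$ enters because the obstructing locus has dimension $\le n-2$ and must be large enough (at least a surface) to realize the higher-tangency configuration; for $n=2$ this collapses, consistently with the positive answer recorded above, and $n=3$ sits at the very threshold of the method.

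The main obstacle is the invariance step. The construction of $X$ and the final multiplicity computation are explicit once the invariant is pinned down, so the weight of the argument falls on proving that $\mu(X,p)$ is genuinely intrinsic to a neighbourhood of $p$ and that it equals $1$ on \emph{every} open subset of $\com^n$, independently of how that subset sits inside affine space. Establishing this local invariance together with the normalization $\mu=1$ for affine charts is precisely the role I would assign to Proposition~\ref{theorem:prop-on-mults}, and it is the part of the proof that must be carried out with the greatest care.
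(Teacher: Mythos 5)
Your high-level outline (compactify $\com^n$, attach a numerical invariant, show uniform rationality forces a constraint on it, then build an example violating the constraint) does match the paper's strategy, but the step you yourself identify as the crux --- the ``invariance step'' --- rests on a premise that is false, and the paper does something genuinely different there. You want an invariant $\mu$ that is \emph{Zariski-local} (intrinsic to a neighbourhood of $p$) and \emph{normalized to $1$ on every chart $\subseteq\com^n$}, so that a chart at $p$ forces $\mu(X,p)=1$. No invariant of Seshadri type can do this: Seshadri-like quantities depend on the global geometry of $X$, not on a Zariski neighbourhood, and indeed in the paper's own example \emph{every interior point lies in a chart $U\simeq\com^n$ and yet has $s(o)\ge 2$} (Corollary~\ref{theorem:small-deg-curves-on-x-cor}), while the boundary points have $s=1$. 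The paper's Proposition~\ref{theorem:prop-on-mults} is not a locality or normalization statement at all; it is a \emph{relative transfer} statement: assuming $X$ u.\,r., any multiplicity achieved at a suitable interior point $o$ by a hypersurface $\Sigma\sim r\Gamma$ can be matched at a suitable boundary point $p$, whence $s(p)\ge s(o)$ (Corollary~\ref{theorem:cor-prop-on-mults}). Its proof is irreducibly global: it uses $\mathrm{Pic}\,X=\cel\cdot\mathcal{O}_X(\Gamma)$, projective normality, and the explicit two-chart gluing of sections of $H^r$ via the boundary equation $t$ --- precisely the data your ``local'' invariant discards. The contradiction in the paper is then the \emph{reversed inequality} $1=s(p)<2\le s(o)$, not a failure of an absolute value $\mu=1$ at a chartable point.

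Two further concrete problems. First, you allow reducible boundary $D$, but irreducibility of $\Gamma$ is essential for the obstruction: Remark~\ref{remark:f-2-hassett-yura} exhibits $\f_1$, a uniformly rational compactification of $\com^2$ with reducible boundary on which the Seshadri constant jumps between boundary components exactly in the way your obstruction would forbid; so in your generality the constraint you hope to prove is simply false. Second, your explanation of $n\ge 4$ (an obstructing locus needing to be at least a surface) is not where the hypothesis enters: in the paper it is needed to make the \emph{construction} work --- the contraction of Lemma~\ref{theorem:x-is-smooth} requires $K_{\tilde V}\cdot Z=3-n<0$, i.\,e. the resulting Fano must have positive index $n-3$ --- and, relatedly, the example itself is far from ``explicit once the invariant is pinned down'': one must produce a compactification whose boundary contains all curves of minimal degree \emph{and} verify that the complement is actually $\com^n$, which in the paper requires the special cubic $F$, the double cover ramified in a carefully chosen $R$, and the coordinate change $y_1=x_1+F,\ y_2=x_2,\ y_i=x_2x_i$ of {\ref{subsection:exa-1}}. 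Without a precise definition of $\mu$, a correct (relative, global) form of the obstruction, the irreducibility hypothesis, and an actual construction, the proposal does not yet constitute a proof.
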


Thus Theorem~\ref{theorem:main} answers
Question~\ref{theorem:misha-ur} negatively. But still it would be
interesting to find out whether a sufficiently large power $X^N :=
X\times X\times\ldots$ of any rational manifold $X$ is u.\,r.
(same question for the ``\,stabilization\,'' $X\times\p^N$ of
$X$). More examples and discussion will be provided in the
forthcoming paper \cite{around-u-r-II}.

\refstepcounter{equation}
\subsection{}
\label{subsection:intro-1}

We proceed with a description of the proof of
Theorem~\ref{theorem:main}. First of all, in view of the above
discussion, it is reasonable to treat only rational manifolds
which are ``\,minimal\,'' in certain sense (like those that are
not blowups of other manifolds for instance). The most common ones
are compactifications of \emph{affine} $\com^n$ with $\text{Pic}
\simeq \cel$ (see {\ref{subsection:proo-0}} below for a setup).
Next, one may guess that being u.\,r. for a rational manifold $X$
results in ``\,homogeneity property\,'' for the underlying set of
points (compare with \cite[$3.5. \text{E}'''$]{gromov-oka-pri}).
The latter means (ideally) that an appropriate test function $f:
X\map\mathbb{R}$ (``\,Gibbs distribution\,'') must be constant on
uniformly rational $X$. More precisely, as soon as just the points
on $X$ are concerned, it is natural to look just for such $f$ that
are conformly invariant (these constitute a class of the
so\,-\,called \emph{asymptotic invariants} of $X$).

Now, if $L$ is an ample (or, more generally, nef) line bundle on
$X$, then the very first candidate for $f$ one can think of would
be the \emph{Seshadri constant} $s_{\scriptscriptstyle L}(\cdot)$
of $L$. Namely, for any point $o \in X$ let us consider the blowup
$\beta: Y \map X$ of $o$, with exceptional divisor $\mathcal{E} :=
\beta^{-1}(o)$, and then put
$$
f(o) := s_{\scriptscriptstyle L}(o) :=
\max\left\{\lambda\in\mathbb{R}\ \vert \ \text{the divisor}\
\beta^*L - \lambda \mathcal{E}\ \text{is nef}\right\}.
$$
We will also write simply $s(o)$ instead of $s_{\scriptscriptstyle
L}(o)$ when $\text{Pic}\,X = \cel \cdot L$ (resp. when $L$ is
clear from the context). Further, conformal invariance of $f =
s_{\scriptscriptstyle L}(\cdot)$ may be seen via another
definition of it as follows (see \cite{dem}):
\begin{equation}
\label{sesh-const} s_{\scriptscriptstyle L}(o) := \sup
\frac{\text{mult}_o\,\mathcal{M}}{k},
\end{equation}
where the supremum is taken over all $k\in\cel$ and the linear
subsystems $\mathcal{M}\subseteq |kL|$ having \emph{isolated} base
locus near $o$. One of the key ingredients in establishing the
expression \eqref{sesh-const} is the Poincar\'e\,-\,Lelong formula
$$
\text{mult}_o\,D = \lim_{r\to
0}\frac{1}{r^{2n-2}}\text{Vol}\,(D\cap B(r)) = \lim_{r\to
0}\frac{1}{r^{2n-2}}\int_{D\cap B(r)}\omega^{2n-2}
$$
for the multiplicity of a hypersurface $D \subset X$ at the point
$o$, written in terms of volumes of intersections with small balls
$B(r)$ centered at $o$.

Recall that the basic play ground for our approach are those $X$
containing $U := \com^n$ as a Zariski open subset. We also require
the boundary $\Gamma := X \setminus U$ to be (of pure codimension
$1$ and) irreducible. Assuming such $X$ uniformly rational, we
claim that $s(\cdot)$ attains the same value at some points on $U$
and $\Gamma$, respectively (see
Corollary~\ref{theorem:cor-prop-on-mults} for a precise
statement). One thus gets a relatively simple numerical criterion
to test uniform rationality of the manifolds in question. The main
issue then is to find a particular $X$ for which this obstruction
actually gives something non\,-\,trivial.

For $n = 3$, as we show in \cite{around-u-r-II}, one does not
obtain anything interesting. However, the needed examples, for
\emph{any} $n \ge 4$, are constructed in Section~\ref{section:exa}
below. The idea behind our construction is to mimic the one for
the fourfold $V_5^4$ from \cite{pro-coindex-3}. Namely, we start
by blowing up $\p^n$ at a smooth cubic of dimension $n - 2$ and
contracting the proper transform of a hyperplane, which gives a
\emph{singular} $n$\,-\,fold $Y$. The only singularity of $Y$
happens to be of the form $\com^n\slash\mu_2$ and so an
appropriate double covering $X \map Y$ makes ($\Gamma$ Cartier
and) $Y$ smooth. Here $X$ is an index $n - 3$ Fano manifold having
$\text{Pic}\,X = \cel\cdot\mathcal{O}_X(\Gamma)$ (we keep the same
notation for the images of $\Gamma$ on $Y,X$, etc). It remains
then to estimate the function $s(\cdot)$ on $X$ and to show that
$X$ indeed compactifies $\com^n$.

The first issue is resolved in
Corollary~\ref{theorem:small-deg-curves-on-x-cor} by an explicit
computation, where we show that $s(\cdot) = 1$ on $\Gamma$, while
$s(\cdot)\ge 2$ on the complement $X\setminus\Gamma$. In turn, the
second issue (that is $X\setminus\Gamma \simeq\com^n$) reduces to
finding a cubic polynomial $P$ such that the double cover of
$\com^n$ with ramification in $P$, i.\,e.
$\text{Spec}\,\com[\com^n][\sqrt{P}]$, is also isomorphic to
$\com^n$. Theorem~\ref{theorem:main} for the given $X$ now follows
from Corollary~\ref{theorem:-five-fold-not-r-u} by combining the
two mentioned properties of $X$ with
Corollary~\ref{theorem:cor-prop-on-mults}.

\begin{remark}
\label{remark:f-2-hassett-yura} The assumption on $\Gamma$ to be
irreducible is crucial in our approach (cf.
Remark~\ref{remark:why-gamma-irred}). In fact, the surface $X :=
\f_1$ is uniformly rational (as a toric surface) and compactifies
$\com^2$, with $\Gamma$ being the union of the $(-1)$\,-\,curve
$Z$ and a ruling $R$ of the natural projection $\f_1\map\p^1$.
Then one can easily see that $s(o) = 3$ (with respect to $-K_X =
2Z + 3R$) for any point $o\not\in Z$. Otherwise we have $s(o) = 1$
--- in contradiction with what happens for irreducible $\Gamma$.
Anyhow, $X$ is an \emph{equivariant} compactification of $\com^2$,
and it would be interesting to find out whether all such
compactifications of $\com^n$ are u.\,r. (cf. \cite{fu-hwang-1}
and \cite{hassett-tschinkel}).
\end{remark}

\bigskip

\begin{notation}
All varieties, unless stated otherwise, are defined over the
complex field $\com$ and are assumed to be normal and projective.
We will be using freely standard notation, notions and facts
(although we recall some of them for convenience) from
\cite{isk-pro}, \cite{kollar-rat-curves} and \cite{kol-mor}.
\end{notation}

\bigskip

\thanks{{\bf Acknowledgments.} I am grateful to C. Birkar, F. Bogomolov, A.\,I. Bondal, S. Galkin, M. Romo,
and J. Ross for their interest and helpful comments. Some parts of
the paper were written during my visits to CIRM, Universit\`a
degli Studi di Trento (Trento, Italy), Cambridge University
(Cambridge, UK) and Courant Institute (New York, US). The work was
supported by World Premier International Research Initiative
(WPI), MEXT, Japan, Grant\,-\,in\,-\,Aid for Scientific Research
(26887009) from Japan Mathematical Society (Kakenhi), and by the
Russian Academic Excellence Project 5\,-\,100.

\section*{}

\bigskip

\section{Beginning of the proof of Theorem~\ref{theorem:main}: an obstruction}
\label{section:proo}

\refstepcounter{equation}
\subsection{}
\label{subsection:proo-0}

Let $X$ be a Fano manifold with $\text{Pic}\,X \simeq \mathbb{Z}$
compactifying $\com^n$. In other words, there exists an affine
open subset $U \subset X$, $U \simeq \com^n$, such that
$\text{Pic}\,X = \cel\cdot\mathcal{O}_X(\Gamma)$ for the boundary
$\Gamma := X \setminus U$. We will also assume that $\Gamma$ is an
\emph{irreducible} hypersurface.

Fix one particular such $X\ne\p^n$ (see Section~\ref{section:exa}
for some examples). Let $H$ be a generator of $\text{Pic}\,X$ and
$x_1,\ldots,x_n$ be affine coordinates on $U$. Then, for some
(minimal) $r$, there exist sections $s_i\in H^0(X,H^r)$ such that
$s_i = x_i$ on $U$. Indeed, with $H^r$ very ample,
$s_i\big\vert_U$ induce an identification $U = \com^n$. We may
also assume without loss of generality that $X \subset \p^{\dim
|H^r|}$ is projectively normal.

Now pick a point $p\in\Gamma$ and a rational function
$t\in\mathcal{O}_{X,p} \subset \com(U)$ defining $\Gamma$ in an
affine neighborhood $U ' \subset X$ of $p$.

\begin{lemma}
\label{theorem:t-inverse-is-regular-on-u} We have
$t^{-1}\in\com[U]$. More precisely, $t^{-1}\big\vert_U$ is an
irreducible polynomial in $x_1,\ldots,x_n$.
\end{lemma}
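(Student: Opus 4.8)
The plan is to compute the divisor of $t$ on $X$ and to exploit that $\operatorname{Pic}X=\cel\cdot H$ forces the boundary $\Gamma$ to be the only prime divisor ``at infinity''. First I record the global picture. Since $U\simeq\com^n$ is affine with complementary divisor $\Gamma=X\setminus U$, the divisor $\Gamma$ is ample; and as $\operatorname{Pic}X=\cel\cdot H$ with $\Gamma$ effective and nonzero, its class is a positive multiple of $H$, so $\mathcal{O}_X(\Gamma)\simeq H^m$ for some integer $m\ge 1$. Consequently $\com[U]=\com[x_1,\ldots,x_n]$, one has $\operatorname{Pic}U=0$, and every invertible element of $\com[U]$ is a constant. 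Finally, because $t$ generates the ideal of $\Gamma$ in the affine neighbourhood $U'$, the prime divisor $\Gamma$ occurs in the divisor of zeros $(t)_0$ with coefficient exactly $1$, i.e. $\operatorname{ord}_\Gamma(t)=1$.

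For the polynomial claim it suffices to show that $(t)_0=\Gamma$, equivalently that $t^{-1}$ has no pole on the interior $U$: then $t^{-1}$ is a regular function on $U\simeq\com^n$, hence lies in $\com[x_1,\ldots,x_n]$. The route I would take is to identify $t^{-1}$ with a ratio of sections of $\mathcal{O}_X(\Gamma)=H^m$. Let $\sigma\in H^0(X,H^m)$ be the (scalar-unique) section with $\operatorname{div}\sigma=\Gamma$; as $\sigma$ is nowhere zero on $U$, it trivialises $H^m$ there, and in this trivialisation the aim is to write $t^{-1}=\tau/\sigma$ for a section $\tau\in H^0(X,H^m)$. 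Since $\sigma$ vanishes to order one precisely along $\Gamma$ and nowhere on $U$, such a representation makes the only pole of $t^{-1}$ a simple pole along $\Gamma$, giving regularity on $U$. I expect this to be the main obstacle: one must rule out that the locally defined $t$ acquires spurious zeros on $U$, i.e. that $(t)_0$ has a component other than $\Gamma$. This is exactly where the irreducibility of $\Gamma$ and $\operatorname{Pic}X=\cel$ must be used to pin down the global divisor of the defining function; note that class balancing alone is insufficient, since writing $t|_U=g/h$ in lowest terms only yields the relation among classes of $\Gamma$, $\overline{V(g)}$ and $\overline{V(h)}$, so the control of $(t)_0$ has to come from the geometric identification with $\tau/\sigma$ rather than from a numerical count.

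Irreducibility is then the clean part. Write $F:=t^{-1}\big\vert_U\in\com[x_1,\ldots,x_n]$; by the previous step $F$ is a polynomial whose only pole on $X$ is the simple pole along $\Gamma$, so $\operatorname{ord}_\Gamma(F)=-1$ and in particular $F$ is nonconstant. The key observation is that every nonconstant $g\in\com[U]$ satisfies $\operatorname{ord}_\Gamma(g)\le -1$: indeed $V(g)\subset\com^n$ is then a nonempty hypersurface, so $\overline{V(g)}$ is a nonzero effective divisor of class $dH$ with $d\ge 1$ and with $\Gamma$ not among its components, while the only poles of $g$ lie along $\Gamma$, say $(g)_\infty=e\Gamma$ with $e=-\operatorname{ord}_\Gamma(g)\ge 0$; from $\operatorname{div}(g)\sim 0$ one gets $dH=e\cdot mH$, whence $d=em$ and thus $e\ge 1$. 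Now if $F=F_1F_2$ with both factors nonconstant, then $\operatorname{ord}_\Gamma(F)=\operatorname{ord}_\Gamma(F_1)+\operatorname{ord}_\Gamma(F_2)\le -2$, contradicting $\operatorname{ord}_\Gamma(F)=-1$. Hence $F$ admits no factorisation into two nonconstant polynomials, i.e. $t^{-1}\big\vert_U$ is irreducible, as claimed.
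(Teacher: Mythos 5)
Your irreducibility argument (the final paragraph) is correct and is essentially the paper's own: $\mathrm{ord}_\Gamma(t^{-1})=-1$, while every nonconstant element of $\com[U]$ must have a pole along the irreducible boundary $\Gamma$, so a factorization into two nonconstant polynomials is impossible. The genuine gap is in the first half, and you flag it yourself: you reduce the lemma to the claim $(t)_0=\Gamma$ (equivalently, that $t$ has no zeros on $U$), but this claim is never proved --- it is only restated in the equivalent form ``$t^{-1}=\tau/\sigma$ for a global section $\tau$'' and deferred as ``the main obstacle''. Rewriting the goal as a desired representation by global sections is not an argument, and, as you correctly observe, class counting cannot supply it; so in your write-up the key assertion $t^{-1}\in\com[U]$ remains unestablished.

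In fact no argument can establish it for an \emph{arbitrary} local equation $t$ of $\Gamma$ on $U'$: whenever $X\setminus U'$ has two or more divisorial components $W_1,W_2$ (of classes $a_1H,a_2H$), the rational function $u$ with $\mathrm{div}(u)=a_2W_1-a_1W_2$ is a nonconstant unit of $\com[U']$, so $ut$ also generates the ideal of $\Gamma$ in $U'$, yet it vanishes along $W_1$, a divisor different from $\Gamma$ (since $\Gamma$ meets $U'$) and hence meeting $U$. The claim is thus a property of the \emph{choice} of $t$, and that is exactly how the paper settles it (``by construction''; cf.~{\ref{subsection:int-2}}, where $t=s\big\vert_{U'}$): take the section $s\in H^0(X,\mathcal{O}_X(\Gamma))$ with $\mathrm{div}(s)=\Gamma$ and let $t$ be its expression in a trivialization of $\mathcal{O}_X(\Gamma)\big\vert_{U'}$. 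Then $\mathrm{div}(t)=\Gamma-D$, where $D$, the divisor of the nowhere-vanishing trivializing section, is effective and disjoint from $U'$; hence $(t)_0=\Gamma$, so $t$ is zero-free on $U=X\setminus\Gamma$ and $t^{-1}$ is regular there, i.e. $t^{-1}\in\com[U]=\com[x_1,\ldots,x_n]$. This single observation is the paper's entire proof of the first assertion; with it in place, your last paragraph completes the lemma.
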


\begin{proof}
The function $t$ does not have any zeroes on $U =
X\setminus\Gamma$ by construction. Hence $t^{-1}$ is a polynomial
$\in\com[U]$. Its irreducibility follows from that of $\Gamma$.
\end{proof}

\refstepcounter{equation}
\subsection{}
\label{subsection:int-2}

Now suppose that $X$ \emph{is} uniformly rational. Let $U'\ni p$
be as above. Then $U'$ embeds into $\com^n$.

Let $s\in H^0(X,H)$ be the section whose zero locus equals
$\Gamma$. By definition of $H$ we have $s\big\vert_{U'} = t$ and
$s\big\vert_U = 1$ (cf.
Lemma~\ref{theorem:t-inverse-is-regular-on-u}), so that the
functions $s\big\vert_{U'},s\big\vert_U\in\com(x_1,\ldots,x_n)$
are identified on $U' \cap U$ via $s\big\vert_{U'} =
ts\big\vert_U$. This yields
\begin{equation}
\label{some-eq-1} y_i := s_i\big\vert_{U'} =
t^rx_i\in\mathcal{O}_{X,p}
\end{equation}
for all $i$. Indeed, both line bundles $H^r\big\vert_{U'}$ and
$H^r\big\vert_U$ are trivial on $U'$ and $U$, respectively, for
$s_i\big\vert_{U'}$ and $s_i\big\vert_U$ regarded as rational
functions on $\com^n$, satisfying $s_i\big\vert_{U'}\in\com[U']$
and $s_i\big\vert_U = x_i$ by construction. Then
$H^r\big\vert_{U'}$ and $H^r\big\vert_U$ are glued over $U' \cap
U$ via the multiplication by $t^r$ as \eqref{some-eq-1} indicates.

\begin{lemma}
\label{theorem:y-i-non-const} In the previous setting, if
$y_i\ne\text{const}$ for all $i$, then $y_1,\ldots,y_n$ are local
parameters on $U'\subseteq\com^n$ generating the maximal ideal of
the $\com$\,-\,algebra $\mathcal{O}_{X,p}$.
\end{lemma}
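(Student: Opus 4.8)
The plan is to exploit that, $X$ being uniformly rational, the open immersion $U'\hookrightarrow\com^n$ makes $\mathcal{O}_{X,p}$ a regular local ring of dimension $n$, so that its maximal ideal $\mathfrak{m}_p$ admits $n$ generators and $\dim_{\com}\mathfrak{m}_p/\mathfrak{m}_p^2=n$. Since we are handed exactly $n$ functions $y_1,\ldots,y_n$, Nakayama's lemma reduces the claim to two statements: (i) each $y_i\in\mathfrak{m}_p$, i.e. $y_i(p)=0$; and (ii) the differentials $dy_1,\ldots,dy_n$ are linearly independent at $p$, equivalently their classes form a basis of the cotangent space $\mathfrak{m}_p/\mathfrak{m}_p^2$. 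Part (i) is the routine one: by \eqref{some-eq-1} we have $y_i=t^rx_i$ with $t(p)=0$, and once $r$ is chosen as in {\ref{subsection:proo-0}} so that $H^r$ is very ample, the pole of each $x_i$ along $\Gamma$ has order less than $r$, whence $y_i=t^rx_i$ is regular and vanishes along $\Gamma\ni p$. The entire weight of the lemma thus sits in (ii).

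For (ii) I would compute the Jacobian $n$\,-\,form outright. Writing $P:=t^{-1}\in\com[x_1,\ldots,x_n]$, an irreducible polynomial by Lemma~\ref{theorem:t-inverse-is-regular-on-u}, we have $y_i=x_i/P^r$ on $U\cap U'$, and a direct computation (matrix determinant lemma) gives
\begin{equation*}
dy_1\wedge\cdots\wedge dy_n=\frac{P-r\,(x\cdot\nabla P)}{P^{rn+1}}\,dx_1\wedge\cdots\wedge dx_n,\qquad x\cdot\nabla P:=\sum_i x_i\,\frac{\partial P}{\partial x_i}.
\end{equation*}
Linear independence of $dy_1,\ldots,dy_n$ at $p$ is then equivalent to this rational $n$\,-\,form being neither zero nor infinite at $p$, i.e. to $p$ lying off the support of its divisor.

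The hard part will be controlling this order of vanishing at $p$. One cannot read it off in the $x$\,-\,chart, where everything degenerates at $p$ (each $x_i$ has a pole along $\Gamma$); the role of uniform rationality is exactly to supply the smooth chart $U'\subseteq\com^n$ in which $p$ is an honest interior point and $dx_1\wedge\cdots\wedge dx_n$ is a rational volume form whose only zero or pole lies along $\Gamma$. Tracking the order of the displayed form along $\Gamma$ then amounts to balancing $\mathrm{ord}_\Gamma$ of the numerator $P-r\,(x\cdot\nabla P)$ against those of $P^{rn+1}$ and of $dx_1\wedge\cdots\wedge dx_n$ (the last contributing the canonical class $K_X$, which is supported on $\Gamma$). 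The subtlety is that these orders are governed by the valuation $\mathrm{ord}_\Gamma$ rather than by naive polynomial degrees, so that the $\mathrm{ord}_\Gamma$\,-\,leading parts of $P$ and of $x\cdot\nabla P$ cancel in a controlled way; it is precisely here that the hypothesis $y_i\neq\mathrm{const}$ must enter, since a constant $y_i$ would force $dy_i=0$ and hence an identical vanishing of the form along $\Gamma$. I expect the cleanest route to be to show, using the irreducibility of $P$ together with the nonconstancy of all the $y_i$, that the numerator does not vanish along $\Gamma$ to any order beyond that already dictated by $P^{rn+1}$ and $K_X$ at a general $p\in\Gamma$; establishing this non\,-\,degeneracy of the Jacobian along $\Gamma$ is the single genuine obstacle, everything else being the formal reduction above.
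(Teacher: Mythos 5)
Your Nakayama reduction and the Jacobian identity are correct, but both halves of the actual argument fail. For (i), the justification is false: it is not true that every $x_i$ has pole order less than $r$ along $\Gamma$. Since $x_i$ is regular on $U=X\setminus\Gamma$, its polar divisor is $d_i\Gamma$, and $x_i$ extends to a section of $H^m$ exactly when $m\ge d_i$; hence the \emph{minimality} of $r$ in {\ref{subsection:proo-0}} says precisely that $d_i=r$ for at least one $i$. For that $i$ the section $s_i$ does not vanish on $\Gamma$, so $y_i$ is nonzero at a general point of $\Gamma\cap U'$ (very ampleness of $H^r$ is irrelevant here). Thus $y_i(p)=0$ for all $i$ is a condition on the \emph{position} of $p$ — it holds exactly when $p\in\bigcap_i(s_i=0)$ — and in the paper this is arranged only later, when $p$ is chosen via Lemma~\ref{theorem:s-i-intersect-on-gamma}; it does not come for free in the present lemma.

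The second and decisive gap is that (ii), the entire content of the statement, is never proved: you leave the "non-degeneracy of the Jacobian along $\Gamma$" as an acknowledged obstacle, and the route you sketch cannot close it. You let the hypothesis $y_i\ne\mathrm{const}$ enter only through "constant $y_i$ forces $dy_i=0$", but individual nonconstancy does not even give algebraic independence of $y_1,\ldots,y_n$, i.e. generic nonvanishing of $dy_1\wedge\cdots\wedge dy_n$: on $X=\p^n$ with $t^{-1}=x_1+x_2$ (a legitimate choice for $U'=\{X_1+X_2\ne 0\}$, $r=1$) all $y_i=X_i/(X_1+X_2)$ are nonconstant, yet $y_1+y_2=1$ and the Jacobian form vanishes identically. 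The setting excludes $X=\p^n$, but nothing in your local $\mathrm{ord}_\Gamma$ bookkeeping uses that exclusion or any other global property of $X$, so the outlined argument cannot distinguish the two situations; moreover, even granted generic nonvanishing, balancing $\mathrm{ord}_\Gamma$ of numerator against denominator only controls the form at a \emph{general} point of $\Gamma\cap U'$, while its zero divisor may well have other components through the particular $p$. The paper's proof is global exactly where yours is local: it asserts that the morphism $\xi=(y_1,\ldots,y_n)$ is \emph{birational} onto $U'$ (this is where nonconstancy, and implicitly $X\ne\p^n$, must carry the weight, via $\com(y_1,\ldots,y_n)=\com(U')$), observes that the $y_i$ have no common codimension-one zero locus (Lemma~\ref{theorem:t-inverse-is-regular-on-u}), so $\xi$ contracts no divisor, and then concludes by Hartogs/Zariski's main theorem that $\xi$ is a local isomorphism near $\Gamma$, which is what "local parameters" means. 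That birational-geometric input — birationality plus absence of contracted divisors — is the missing idea in your proposal, and it is not recoverable from pointwise Jacobian calculus without essentially redoing it.
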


\begin{proof}
Notice that
$$\com(x_1,\ldots,x_n)=\com(U)=\com(U')=\com(y_1,\ldots,y_n)$$
by construction, i.\,e. $x_i=y_i/t^r$ (resp. $y_i$) are
(birational) coordinates on $U'$, defined everywhere out of
$\Gamma$ (resp. everywhere on $U'$). This implies that the
morphism $\xi: \com^n \cap (t^{-1} \ne 0) \map (U'\subseteq
\com^n)$, given by
$$
(x_1,\ldots,x_n)\mapsto(y_1 = x_1t^r,\ldots,y_n = x_nt^r),
$$
is birational.\footnote{~More specifically, dividing all the $x_i$
by $x_1$, say, one may assume $x_i=y_i$ on $U'\subseteq\com^n$ for
all $i\ge 2$. Then $\xi$ is simply the multiplication of $x_1$ by
$t^r$.}

Functions $y_i$ do not have a common codimension $1$ zero locus on
$U$ (cf. Lemma~\ref{theorem:t-inverse-is-regular-on-u}). Hence
$\xi$ does not contract any divisors. In particular, $\xi^{-1}$ is
well\,-\,defined near $\Gamma$ by Hartogs, which shows that
$y_1,\ldots,y_n$ are the claimed local parameters.
\end{proof}

\begin{lemma}
\label{theorem:phi-const-r-1} $y_i = \text{const}$ for at most one
$i$.
\end{lemma}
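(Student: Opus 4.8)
The plan is to argue by contradiction and reduce everything to a statement about rational functions in the coordinates $x_1,\ldots,x_n$. Suppose that $y_i=\text{const}$ holds for two distinct indices, say $y_1=c_1$ and $y_2=c_2$ with $c_1,c_2\in\com$. By Lemma~\ref{theorem:t-inverse-is-regular-on-u} the function $P:=t^{-1}\big\vert_U\in\com[U]$ is an irreducible (in particular nonzero) polynomial, so that $t^r=P^{-r}$ in $\com(x_1,\ldots,x_n)$. Under the identification $\com(x_1,\ldots,x_n)=\com(U)=\com(U')$ used throughout {\ref{subsection:int-2}}, the germs $y_i\in\mathcal{O}_{X,p}$ are by \eqref{some-eq-1} precisely the rational functions $t^rx_i=x_i/P^r$. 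Hence the assumed constancy of $y_1,y_2$ becomes the pair of identities $x_1=c_1P^r$ and $x_2=c_2P^r$ in the field $\com(x_1,\ldots,x_n)$, and the whole lemma is now a purely algebraic assertion about these relations.

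The key step is then a one-line field-theoretic computation. If one of the constants vanishes, say $c_1=0$, then $x_1=c_1P^r=0$, which is absurd since $x_1$ is one of the affine coordinates on $U\simeq\com^n$. Thus $c_1,c_2\ne 0$, and dividing the two relations eliminates $P$ entirely, giving $x_1/x_2=c_1/c_2$, a nonzero constant. This contradicts the algebraic independence of $x_1$ and $x_2$: their ratio is a genuinely non\,-\,constant rational function on $\com^n$. The contradiction shows that $y_i$ cannot be constant for two different indices, which is exactly the claim.

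Since the argument is essentially formal once the translation is made, the only point that genuinely requires care — and the one I would record explicitly — is the passage from \emph{$y_i\in\mathcal{O}_{X,p}$ is constant as a germ at $p$} to \emph{$t^rx_i$ is constant as a rational function on $\com^n$}. This is supplied by the setup preceding the lemma: the sections $s_i\big\vert_{U'}$ together with the gluing \eqref{some-eq-1} identify $y_i$ with $t^rx_i$ as elements of $\com(x_1,\ldots,x_n)$, so a germ lying in $\com\subset\mathcal{O}_{X,p}$ forces the associated rational function to lie in $\com$ as well. With this identification in hand, neither the minimality of $r$ nor the choice of the point $p\in\Gamma$ plays any role, and no further obstacle arises.
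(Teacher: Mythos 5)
Your argument is correct and is essentially the paper's own proof: the paper likewise derives from \eqref{some-eq-1} that two constant $y_i$'s would force the sections $s_i$ and $s_j$ (equivalently the coordinates $x_i$ and $x_j$) to be proportional, which is absurd. Your version merely spells out the division step and the degenerate case of a vanishing constant, so no comparison beyond this is needed.
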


\begin{proof}
Indeed, otherwise \eqref{some-eq-1} gives $s_i=s_j$ on $X$ for
some $i\ne j$, a contradiction.
\end{proof}

\begin{lemma}
\label{theorem:phi-const-r-1-a} Let $y_1=\text{const}$. Then
$t^r,y_2,\ldots,y_n\in\mathcal{O}_{X,p}$ are local parameters on
$U'\subseteq\com^n$ generating the maximal ideal of the
$\com$\,-\,algebra $\mathcal{O}_{X,p}$.
\end{lemma}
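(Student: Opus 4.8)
The plan is to follow the proof of Lemma~\ref{theorem:y-i-non-const} almost verbatim, replacing the now-constant function $y_1$ by $t^r$. The motivation is that the hypothesis $y_1 = \text{const}$ is exactly what lets $t^r$ take over the role of a coordinate: writing $y_1 = c$ for the constant value, formula \eqref{some-eq-1} reads $t^r x_1 = c$ as rational functions on $X$ (an identity on the open set $U'$ persists on all of $X$ by irreducibility), so that $x_1 = c/t^r$. Thus the ``lost'' coordinate $x_1$ is recovered from $t^r$, and it is natural to expect $t^r, y_2,\ldots,y_n$ to be a regular system of parameters at $p$.

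Concretely, I would introduce the morphism
$$\eta: \com^n\cap(t^{-1}\ne 0)\map (U'\subseteq\com^n),\qquad (x_1,\ldots,x_n)\mapsto(t^r, y_2,\ldots,y_n),$$
and first check that it is birational. This is the analogue of $\xi$ from Lemma~\ref{theorem:y-i-non-const}: from $x_1 = c/t^r$ and $x_i = y_i/t^r$ for $i\ge 2$ (both consequences of \eqref{some-eq-1}) one gets $\com(x_1,\ldots,x_n)=\com(t^r,y_2,\ldots,y_n)$, so $\eta$ induces an isomorphism of function fields. Note also that $t^r$ is non-constant, since its inverse $t^{-1}$ is a non-constant (indeed irreducible) polynomial by Lemma~\ref{theorem:t-inverse-is-regular-on-u}, while $y_2,\ldots,y_n$ are non-constant by Lemma~\ref{theorem:phi-const-r-1}; so all $n$ coordinate functions of $\eta$ are genuinely non-constant.

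It then remains to rerun the divisor-contraction/Hartogs step. The key observation is that $t^r$ \emph{does not vanish} anywhere on the domain $\com^n\cap(t^{-1}\ne 0)$: there $t^{-1}$ takes a finite non-zero value, hence so does $t^r=(t^{-1})^{-r}$. Consequently the coordinate functions $t^r, y_2,\ldots,y_n$ of $\eta$ have empty common zero locus, so in particular they share no codimension $1$ zero locus and $\eta$ contracts no divisors. As in Lemma~\ref{theorem:y-i-non-const}, this makes $\eta^{-1}$ well-defined near $\Gamma$ by Hartogs, which shows that $t^r, y_2,\ldots,y_n$ are local parameters on $U'$ generating the maximal ideal of $\mathcal{O}_{X,p}$.

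The only place where this differs from Lemma~\ref{theorem:y-i-non-const} is the bookkeeping around the substitution, and that is where I would be most careful: one must confirm that inserting $t^r$ in place of the constant $y_1$ neither destroys birationality (handled by $x_1=c/t^r$) nor creates a contracted divisor (handled by the non-vanishing of $t^r$ on the domain). I expect no genuine obstacle beyond this. Indeed, $y_1=c$ together with \eqref{some-eq-1} and the irreducibility of $t^{-1}$ forces $r=1$ and $t^{-1}=x_1/c$, so that $\eta$ is simply the coordinate inversion $(x_1,\ldots,x_n)\mapsto(c/x_1, cx_2/x_1,\ldots,cx_n/x_1)$; but the argument above does not need this simplification and is stated so as to work directly from the hypothesis.
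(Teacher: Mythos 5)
Your proposal is correct and is essentially the paper's own proof: the paper normalizes the constant to $y_1=1$, introduces exactly the same morphism $\eta\colon(x_1,\ldots,x_n)\mapsto(t^r=1/x_1,\,y_2=x_2t^r,\ldots,y_n=x_nt^r)$, and concludes via the same no-contracted-divisor plus Hartogs argument borrowed from Lemma~\ref{theorem:y-i-non-const}, which you have merely spelled out in detail (birationality via the function-field identification, non-contraction via the non-vanishing of $t^r$ on the domain). Your closing observation that the hypothesis forces $r=1$ and $t^{-1}=x_1/c$ is consistent with the paper's explicit formula $t^r=1/x_1$, and, as you say, neither proof needs it.
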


\begin{proof}
One may assume that $y_1=1$. Then $y_i\ne\text{const}$ for all
$i\ge 2$ by Lemma~\ref{theorem:phi-const-r-1}, and a similar
argument as in the proof of Lemma~\ref{theorem:y-i-non-const}
shows that birational morphism $\eta: \com^n\cap (t^{-1} \ne 0)
\map (U'\subseteq \com^n)$, given by
$$
\eta:
(x_1,\ldots,x_n)\mapsto(t^r=1/x_1,y_2=x_2t^r,\ldots,y_n=x_nt^r),
$$
does not contract any divisors. Hence again $t^r,y_2,\ldots,y_n$
are the asserted local parameters.
\end{proof}

\begin{remark}
\label{remark:why-gamma-irred} An upshot of the previous
considerations is that the whole ``\,analysis\,'' on $X$, encoded
in the line bundle $H$, can be captured just by \emph{two} charts,
like $U$ and $U'$, with a transparent gluing (given by $t$) on the
overlap $U \cap U'$. Let us stress one more time that this holds
under the assumption that $X$ is u.\,r. In addition, as will also
be seen in {\ref{subsection:int-3}}, similar property does not
extend directly to the case of $X$ with reducible boundary
$\Gamma$ (compare Proposition~\ref{theorem:prop-on-mults} and
Remark~\ref{remark:f-2-hassett-yura}).
\end{remark}

\refstepcounter{equation}
\subsection{}
\label{subsection:int-3}

Let $h\in H^0(X,H^r)$ be any section. One may write (cf.
{\ref{subsection:int-2}})
\begin{equation}
\label{eq-for-h-u} \frac{h\big\vert_{U}}{s^r\big\vert_{U}} =
\sum_{0 \le i_1 + \ldots + i_n\le m}
a_{i_1,\ldots,i_n}x_1^{i_1}\ldots x_n^{i_n},
\end{equation}
where $a_{i_1,\ldots,i_n}\in\com$, $m = m(h)\ge 0$ and $i_j$ are
non\,-\,negative integers. Now, it follows from \eqref{some-eq-1}
and Lemmas~\ref{theorem:y-i-non-const},
\ref{theorem:phi-const-r-1-a} that
\begin{equation}
\label{eq-for-h-u-prime} h\big\vert_{U'} = \sum_{0 \le i_1 +
\ldots + i_n\le m} a_{i_1,\ldots,i_n}y_1^{i_1}\ldots y_n^{i_n}
t^{2r - r(i_1 + \ldots + i_n)}.
\end{equation}
Conversely, starting with \emph{any} function on $U'$ as in
\eqref{eq-for-h-u-prime}, with $m \le 2$, we can find $h \in
H^0(X,H^r)$ such that $h\big\vert_{U'} = \text{RHS of}$
\eqref{eq-for-h-u-prime} (cf.
Remark~\ref{remark:why-gamma-irred}). Indeed, in this way we get a
global section of $H^r$, regular away the codimension $\ge 2$
locus $X \setminus U \cup U'$ (recall that $\Gamma$ is
irreducible), hence regular on the entire $X$.

This discussion condensates to the next

\begin{prop}
\label{theorem:prop-on-mults} There exist a point $o\in U$ and a
point $p = p(o)\in\Gamma\cap U'$ such that for any hypersurface
$\Sigma\sim r\Gamma$,\footnote{~``\,$\sim$\,'' denotes the linear
equivalence of divisors on $X$.} having prescribed multiplicity
$\mathrm{mult}_{o}\,\Sigma > 0$ at $o$, there is a hypersurface
$\hat{\Sigma}\sim r\Gamma$ such that
$\mathrm{mult}_{p}\,\hat{\Sigma} \ge \mathrm{mult}_{o}\,\Sigma$.
\end{prop}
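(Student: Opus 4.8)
The plan is to transfer multiplicity data from the point $o \in U$ to the point $p \in \Gamma$ by exploiting the explicit change of coordinates $y_i = t^r x_i$ from \eqref{some-eq-1} together with the section-matching principle established in {\ref{subsection:int-3}}. First I would choose the points. A hypersurface $\Sigma \sim r\Gamma$ is the zero locus of some section $h \in H^0(X, H^r)$, so on $U$ it is cut out by a polynomial $h\big\vert_U / s^r\big\vert_U$ as in \eqref{eq-for-h-u}; its multiplicity at $o$ is simply the order of vanishing of this polynomial at $o$. The natural candidate for $o$ is the origin of $U \simeq \com^n$ (where $x_1 = \ldots = x_n = 0$), since there $\mathrm{mult}_o\,\Sigma$ equals the lowest total degree $m_0$ of a monomial appearing in the expansion \eqref{eq-for-h-u}. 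Correspondingly I would take $p = p(o) \in \Gamma \cap U'$ to be a point at which $y_2, \ldots, y_n$ (or $t^r, y_2, \ldots, y_n$ in the degenerate case of Lemma~\ref{theorem:phi-const-r-1-a}) are the local parameters vanishing, which is legitimate precisely because those functions generate the maximal ideal $\mathfrak{m}_{X,p}$ by Lemmas~\ref{theorem:y-i-non-const} and \ref{theorem:phi-const-r-1-a}.

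The heart of the argument is the following comparison. Given $\Sigma$ with $\mathrm{mult}_o\,\Sigma = m_0 > 0$, I look at the corresponding restriction $h\big\vert_{U'}$ via \eqref{eq-for-h-u-prime}, namely
\begin{equation}
\label{plan-expansion}
h\big\vert_{U'} = \sum_{0 \le i_1 + \ldots + i_n \le m} a_{i_1,\ldots,i_n}\, y_1^{i_1}\cdots y_n^{i_n}\, t^{2r - r(i_1 + \ldots + i_n)}.
\end{equation}
The key observation is that the exponent $2r - r(i_1 + \ldots + i_n)$ of $t$ is a \emph{decreasing} function of the total monomial degree $i_1 + \ldots + i_n$, whereas the multiplicity at $o$ is governed by the \emph{smallest} such degree. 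Since $t$ itself is a local parameter cutting out $\Gamma$ at $p$, the terms with smallest monomial degree in $x$ (which dictate $\mathrm{mult}_o\,\Sigma$) are exactly those carrying the \emph{highest} power of $t$, hence the highest vanishing order along $\Gamma$ at $p$. The strategy, then, is to use the converse direction of {\ref{subsection:int-3}}: truncating or rescaling the monomials of minimal $x$-degree, I can build a new section $\hat{h} \in H^0(X, H^r)$ — regular on all of $X$ because $\Gamma$ is irreducible and $X \setminus (U \cup U')$ has codimension $\ge 2$, by Hartogs — whose zero locus $\hat\Sigma$ concentrates its vanishing at $p$. Comparing the $t$-adic valuation of \eqref{plan-expansion} with the monomial structure should yield $\mathrm{mult}_p\,\hat\Sigma \ge \mathrm{mult}_o\,\Sigma$.

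The main obstacle I anticipate is the degree constraint $m \le 2$ imposed in the converse statement of {\ref{subsection:int-3}}, which stems from the factor $t^{2r - r(i_1 + \ldots + i_n)}$ becoming a pole (i.e. the $t$-exponent turning negative) once the total degree exceeds $2$. This forces a careful bookkeeping: to keep $\hat h$ globally regular I can only retain monomials of total $x$-degree at most $2$, so I must verify that the multiplicity $\mathrm{mult}_o\,\Sigma$ I am trying to match does not itself exceed what the two-chart description can accommodate, or else reduce to the relevant low-degree part. The technical crux will be checking that the passage from the minimal-degree part of \eqref{eq-for-h-u} to a genuine global section $\hat h$ does not lose multiplicity at $p$, i.e. that no cancellation occurs among the highest $t$-power terms and that the resulting $\hat\Sigma$ is a bona fide hypersurface in $|r\Gamma|$. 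Once this compatibility between the monomial filtration on $U$ and the $t$-adic filtration at $p$ is pinned down, the asserted inequality follows directly from the coordinate change \eqref{some-eq-1}.
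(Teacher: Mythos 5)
Your setup coincides with the paper's: you take $o$ to be the origin of $U$, you take $p$ to be a point of $\Gamma$ where the functions $y_i$ vanish, and you run the comparison through the two expansions \eqref{eq-for-h-u} and \eqref{eq-for-h-u-prime}. But the mechanism you put forward for the inequality is not the one that works. You claim that the minimal-degree monomials (which compute $\mathrm{mult}_o\,\Sigma$) carry the highest power of $t$ and hence vanish to high order along $\Gamma$ at $p$, and that comparing $t$-adic valuations should give the estimate. The exponent in question is $2r - rm_0$ for a monomial of total degree $m_0$: it is already $0$ when $m_0 = 2$ and never exceeds $r$ once $m_0 \ge 1$, so the $t$-adic valuation alone cannot produce $\mathrm{mult}_p \ge m_0$. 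What actually carries the proof is that \emph{all $n$} of the $y_i$ vanish simultaneously at $p$, so that a single monomial $y_1^{i_1}\cdots y_n^{i_n}t^{e}$ with $e\ge 0$ already has multiplicity $\ge i_1+\ldots+i_n = \mathrm{mult}_o\,\Sigma$ at $p$, the $t$-power being only a nonnegative bonus; one may then simply take $\hat{\Sigma} := \Sigma$ after reducing to one monomial. For this one must prove that the $n$ hypersurfaces $(s_i=0)$ have a common point on $\Gamma$ --- this is Lemma~\ref{theorem:s-i-intersect-on-gamma} of the paper, whose proof uses that otherwise $\cap H_i$ would be a single reduced point and $X=\p^n$. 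You gesture at Lemmas~\ref{theorem:y-i-non-const} and \ref{theorem:phi-const-r-1-a} when choosing $p$, but you never isolate or justify this simultaneous-vanishing statement, and your ``key observation'' points at the wrong filtration.

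Separately, the degenerate case in which some $y_i$ is constant genuinely requires its own argument: there the variable $x_i$ contributes $i_1\,\mathrm{mult}_o\,t^{-r}$ rather than $i_1$ to $\mathrm{mult}_o\,\Sigma$, the paper deduces $\mathrm{mult}_o\,\Sigma \le 2$, and it exhibits a \emph{different} $\hat{\Sigma}$ through $p$ with $i_1=0$ and $i_2i_3\ne 0$; taking $\hat{\Sigma}=\Sigma$ or a truncation of it does not suffice here. You flag the $m\le 2$ constraint and the globalization of the truncated section as the ``technical crux'' but resolve neither, and these, together with the case analysis on constant $y_i$, are exactly the points where the proof is decided. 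As written, the proposal is a plausible plan in the correct coordinates, with the decisive steps missing.
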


\begin{proof}
Set $o \in U = \com^n$ to be the origin with respect to $x_i$.

\begin{lemma}
\label{theorem:s-i-intersect-on-gamma} The loci $H_i := (s_i =
0)$, $1\le i\le n$, have a common intersection point, denoted $p$,
on $\Gamma$.
\end{lemma}

\begin{proof}
Assume the contrary. Then all $y_i \ne \text{const}$ (cf.
Lemma~\ref{theorem:phi-const-r-1}), for otherwise $\Gamma = (s_1 =
0)$, say, and so $\cap H_i = \deg \Gamma \ne 0$. Further, by
construction $\cap H_i$ is a (reduced) point, which immediately
gives $X = \p^n$ (recall that $H^r$ is very ample according to the
setting in {\ref{subsection:proo-0}}), a contradiction.
\end{proof}

Let the section $h \in H^0(X,H^r)$ correspond to $\Sigma$. We may
assume without loss of generality all but one $a_{i_1,\ldots,i_n}$
in \eqref{eq-for-h-u} and \eqref{eq-for-h-u-prime} to be zero. Let
also $p$ be as in Lemma~\ref{theorem:s-i-intersect-on-gamma}.
Then, since $t(p) = 0$ by definition, one may take $\hat{\Sigma}
:= \Sigma$ whenever all $y_i \ne \text{const}$. Finally, if $y_1 =
1$ (and $i_1 \ne 0$), say, then from \eqref{some-eq-1} and
Lemmas~\ref{theorem:phi-const-r-1}, \ref{theorem:phi-const-r-1-a}
we obtain
$$
\mathrm{mult}_{o}\,\Sigma = i_1\mathrm{mult}_{o}\,t^{-r} + i_2 +
\ldots + i_n \le 2.
$$
It is thus suffices to take any $\hat{\Sigma} \ni p$ with $i_1 =
0$ and $i_2i_3 \ne 0$.
\end{proof}

\begin{remark}
\label{remark:con-from-proof} The proof of
Proposition~\ref{theorem:prop-on-mults} shows that both $\Sigma$
and $\hat{\Sigma}$ can actually be taken to vary in some linear
systems, having isolated base loci near $o$ and $p$, respectively.
Furthermore, the value $s(o)$ is attained on a linear system
$\mathcal{M}$ (cf. {\ref{subsection:intro-1}}), with isolated base
point at $o$, iff the value $s(p)$ is attained on a similar linear
system for $p$ and $\hat{\Sigma}$.
\end{remark}

\begin{cor}
\label{theorem:cor-prop-on-mults} For $o \in U$ and $p \in \Gamma$
as above we have $s(p) \ge s(o)$.
\end{cor}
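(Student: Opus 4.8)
The plan is to compute both $s(o)$ and $s(p)$ through the multiplicity description \eqref{sesh-const} of the Seshadri constant and to transport, for each competing linear system at $o$, a system of the same degree to $p$ whose multiplicity does not drop. Since $\text{Pic}\,X=\cel\cdot\mathcal{O}_X(\Gamma)$ and the ample generator is $L=\mathcal{O}_X(\Gamma)$, every admissible system in \eqref{sesh-const} has the shape $\mathcal{M}\subseteq|k\Gamma|$ with isolated base locus near the given point, so that
$$
s(o)=\sup_{k,\,\mathcal{M}}\frac{\mathrm{mult}_o\,\mathcal{M}}{k},\qquad s(p)=\sup_{k,\,\hat{\mathcal{M}}}\frac{\mathrm{mult}_p\,\hat{\mathcal{M}}}{k}.
$$
It therefore suffices to attach to every $\mathcal{M}\subseteq|k\Gamma|$ with isolated base locus near $o$ a system $\hat{\mathcal{M}}\subseteq|k\Gamma|$ with isolated base locus near $p$ satisfying $\mathrm{mult}_p\,\hat{\mathcal{M}}\ge\mathrm{mult}_o\,\mathcal{M}$; passing to suprema then gives $s(p)\ge s(o)$.

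To produce $\hat{\mathcal{M}}$ I would take $o$ and $p=p(o)$ to be the points supplied by Proposition~\ref{theorem:prop-on-mults} and Lemma~\ref{theorem:s-i-intersect-on-gamma}, and choose a general member $\Sigma\in\mathcal{M}$, for which $\mathrm{mult}_o\,\Sigma=\mathrm{mult}_o\,\mathcal{M}$. Feeding $\Sigma$ into Proposition~\ref{theorem:prop-on-mults} yields a divisor $\hat\Sigma\sim k\Gamma$ with $\mathrm{mult}_p\,\hat\Sigma\ge\mathrm{mult}_o\,\Sigma$, and Remark~\ref{remark:con-from-proof} lets me arrange $\Sigma$ and $\hat\Sigma$ to move in linear systems with isolated base loci near $o$ and $p$; the latter is the desired $\hat{\mathcal{M}}\subseteq|k\Gamma|$. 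Consequently $s(p)\ge\mathrm{mult}_p\,\hat{\mathcal{M}}/k\ge\mathrm{mult}_o\,\mathcal{M}/k$, and taking the supremum over all $\mathcal{M}$ closes the argument.

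I expect the delicate step to be confirming that the transported $\hat{\mathcal{M}}$ is a bona fide competitor for $p$ in \eqref{sesh-const}: its base locus must stay \emph{isolated} near $p$, and it must lie in $|k\Gamma|$ rather than in some larger multiple. Both are exactly what Remark~\ref{remark:con-from-proof} asserts, so the real content sits in justifying that remark, i.e. that the correspondence $\Sigma\mapsto\hat\Sigma$ read off from \eqref{eq-for-h-u}--\eqref{eq-for-h-u-prime} sends a pencil with isolated base point at $o$ to one with isolated base point at $p$ without lowering the multiplicity. A secondary point is the degree bookkeeping: Proposition~\ref{theorem:prop-on-mults} is phrased for $r\Gamma$, whereas the Seshadri supremum ranges over all $k\Gamma$, so I would check that the expansions \eqref{eq-for-h-u} and \eqref{eq-for-h-u-prime} --- and hence the multiplicity count in the proof of Proposition~\ref{theorem:prop-on-mults} --- go through with $r$ replaced by an arbitrary $k$, the only inputs being the transition law for $H$ across $U\cap U'$ recorded in \eqref{some-eq-1} and the projective normality of $X$.
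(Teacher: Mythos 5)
Your overall framing coincides with the paper's: both arguments run the Seshadri constant through \eqref{sesh-const} and transport linear systems at $o$ to linear systems at $p$ via Proposition~\ref{theorem:prop-on-mults} together with Remark~\ref{remark:con-from-proof}, then pass to suprema. The genuine divergence --- and the place where your argument has a gap --- is exactly the degree bookkeeping you flag at the end. Proposition~\ref{theorem:prop-on-mults} is proved only for $\Sigma\sim r\Gamma$, and its proof does \emph{not} go through by mechanically ``replacing $r$ with an arbitrary $k$''. In the non-degenerate case (all $y_i\ne\text{const}$) the substitution is indeed harmless: one may still take $\hat\Sigma:=\Sigma$. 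But the degenerate case ($y_1=\text{const}$, treated via Lemma~\ref{theorem:phi-const-r-1-a}) rests on two degree-$r$-specific facts: the bound $\mathrm{mult}_o\,\Sigma\le 2$, and the choice of $\hat\Sigma$ with $i_1=0$, $i_2i_3\ne 0$, i.e. a divisor of multiplicity exactly $2$ at $p$. For sections of $H^k$ the attainable multiplicity at $o$ grows with $k$, so both the bound and the choice of $\hat\Sigma$ must be reworked (one needs monomials $y_2^ay_3^b$ of $k$-dependent degree, together with the Hartogs-type extension argument showing these are honest sections of $H^k$). Neither the transition law \eqref{some-eq-1} nor projective normality, the two inputs you name, performs this rework by itself; so the crucial step of your proof is asserted rather than proved.

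The paper sidesteps this issue by a different device, which is the real content of its proof of the Corollary: first reduce to $r=1$ using $s_{\scriptscriptstyle H^r}(\cdot)=rs(\cdot)$, so that Proposition~\ref{theorem:prop-on-mults} applies to divisors in $|H|$ itself; then use projective normality of $X\subset\p^{\dim|H|}$ to write any competing section $h\in H^0(X,H^k)$ as a homogeneous polynomial $h(h_1,\ldots,h_n)$ in sections $h_j\in H^0(X,H)$, with $\mathrm{mult}_o\,h=\sum_j m_j(k)$; transport each \emph{linear} factor by the Proposition to $\hat h_j$ with $\mathrm{mult}_p\,\hat h_j\ge m_j(k)$; and recombine as $\hat h:=h(\hat h_1,\ldots,\hat h_n)\in H^0(X,H^k)$, the point being that multiplicities add under products, so $\mathrm{mult}_p\,\hat h\ge\sum_j m_j(k)$. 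In short, where you propose to re-prove the Proposition in every degree, the paper keeps the Proposition as stated and pushes the degree-$k$ information through the multiplicative structure of the section ring; this is precisely why projective normality enters there, rather than as an ingredient of an extended Proposition. Your route is reparable, but as written the extension of Proposition~\ref{theorem:prop-on-mults} from $r$ to all $k$ is a genuine missing step, not a formality.
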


\begin{proof}
Fix some $h \in\mathcal{M}$ and $k := k_i$ as in
\eqref{sesh-const}. We may assume w.\,l.\,o.\,g. that $r = 1$
because $s_{\scriptscriptstyle H^r}(\cdot) = rs(\cdot)$. We may
also take $h = h(h_1,\ldots,h_n)$ to be a homogeneous polynomial
in some $h_i\in H^0(X,H)$ for $X\subset\p^{\dim |H|}$ being
projectively normal (cf. {\ref{subsection:proo-0}}). Let $m_j(k)
:= \mathrm{mult}_{o}\,h_j$ be such that $\mathrm{mult}_{o}\,h =
\displaystyle\sum_j m_j(k)$. One may assume that sup lim of
$\displaystyle\sum_j m_j(k)\slash k$ exists and equals $s(o)$.

Now, Proposition~\ref{theorem:prop-on-mults} provides some
sections $\hat{h}_1,\ldots,\hat{h}_n\in H^0(X,H)$, having
$\mathrm{mult}_{p}\,\hat{h}_j \ge m_i(k)$ for all $j$. Then we
obtain $\hat{h} := h(\hat{h}_1,\ldots,\hat{h}_n)\in H^0(X,H^k)$
and $\mathrm{mult}_{p}\,\hat{h} \ge \displaystyle\sum_j m_j(k)$.
Thus by \eqref{sesh-const} and Remark~\ref{remark:con-from-proof}
we get $s(p)\ge s(o)$ as wanted.
\end{proof}

With Corollary~\ref{theorem:cor-prop-on-mults} we conclude our
construction of a necessary condition for the manifold $X \supset
\com^n$ in {\ref{subsection:proo-0}} to be u.\,r. Let us now
construct those $X$ that do not pass through this simple
obstruction.

\section*{}

\bigskip

\section{End of the proof of Theorem~\ref{theorem:main}}
\label{section:exa}

\refstepcounter{equation}
\subsection{}
\label{subsection:exa-0}

Take the projective space $\p := \p^n$, $n \ge 4$, with a
hyperplane $H\subset\p$ and a cubic hypersurface $S \subset H$.
Let $\sigma: V \map \p$ be a blowup of (the ideal defining) $S$.
More specifically, for the reasons that will become clear in
{\ref{subsection:exa-1}} below, we assume $V \subset \p^1 \times
\p$ to be given by (local) equation
$$
wt_0 = (wx_1^2 + F)t_1,
$$
where $t_i$ are projective coordinates on the first factor and
$H,S$ are given by $w = 0,w = F(x_1,\ldots,x_n) = 0$,
respectively, in projective coordinates $w,x_i$ on $\p$.
Furthermore, we take $F$ in the form
$$
x_1x_3^2 + x_2^2x_4 + F_3,
$$
with a general homogeneous cubic $F_3\in\com[x_3,\ldots,x_n]$.
This easily shows (Bertini) that $V$ is
\emph{smooth}.\footnote{~Note once again that $V$ is glued out of
local charts of the form $wt_0 = \widetilde{F}t_1$ for various
(smooth) cubics $(\widetilde{F} = 0) \subset \p$ containing $S$.
These charts form a smooth cover of $V$.}

Put $E := \sigma^{-1}S$ and $H^* := \sigma^*H$. Notice that
$\sigma$ resolves the indeterminacies of the linear system $|3H -
S|$. Let $\varphi:V\map Y$ be the corresponding morphism onto some
variety $Y$ with very ample divisor $\mathcal{O}_Y(1)$ pulling
back to $3H^* - E$.

\begin{lemma}
\label{theorem:phi-is-bir-and-h} $\varphi$ is birational and
contracts the divisor $H_V := \sigma_*^{-1}H\simeq\p^{n-1}$ to a
point.
\end{lemma}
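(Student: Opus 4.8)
The plan is to analyze the morphism $\varphi: V \map Y$ defined by the linear system $|3H^* - E|$ (equivalently $\mathcal{O}_Y(1)$) and understand its geometry directly. First I would verify that $\varphi$ is birational by checking that $3H^* - E$ is big and that the map it defines separates generic points. Since $\sigma: V \map \p^n$ is the blowup of the codimension-$2$ locus $S \subset H$, the Picard group of $V$ is $\cel H^* \oplus \cel E$, and the divisor $3H^* - E$ is the proper transform of the linear system $|3H - S|$ of cubics through $S$. Because a general such cubic is irreducible and the system is large (it contains, e.g., all cubics vanishing on $S$, which certainly separate generic points of $\p^n$), the induced rational map $\p^n \dashrightarrow Y$ is birational onto its image, and $\sigma$ resolves its indeterminacy; hence $\varphi$ is birational.

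The key step is to identify the locus contracted by $\varphi$, and to show it is precisely $H_V := \sigma_*^{-1}H$. The point is that the restriction of the system $|3H - S|$ to the hyperplane $H$ consists of cubic surfaces in $H \simeq \p^{n-1}$ that contain the fixed cubic $S \subset H$; but $S$ is itself a cubic hypersurface of $H$, so every member of $|3H - S|$ meets $H$ along $S$ (the full fixed component) plus possibly residual pieces. More precisely, I would compute $(3H^* - E)\big\vert_{H_V}$: since $E\big\vert_{H_V}$ is the exceptional divisor of blowing up $S$ inside $H$ and $S = 3H\big\vert_H$ as a divisor class on $H \simeq \p^{n-1}$, the class $(3H^* - E)\big\vert_{H_V}$ should turn out to be trivial (or numerically zero). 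This would force $\varphi$ to contract $H_V$. Concretely, on $H \simeq \p^{n-1}$ the cubic hypersurface $S$ is cut out by $F$, a degree-$3$ form, so the linear system $|3H - S|$ restricted to $H$ is $|3 \cdot \mathcal{O}_{\p^{n-1}}(1) - (F = 0)| = |\mathcal{O}_{\p^{n-1}}|$, a single point; thus $H_V$ maps to a point.

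To see that $H_V \simeq \p^{n-1}$, note that $\sigma$ is an isomorphism over $H \setminus S$, and blowing up the smooth center $S \subset H$ replaces $S$ by its exceptional divisor while leaving the proper transform $H_V$ of $H$ abstractly isomorphic to the blowup of $\p^{n-1}$ along $S$; but since $S$ has codimension $1$ in $H$, blowing up a divisor is an isomorphism, so $H_V \simeq H \simeq \p^{n-1}$. This is the crucial local-geometry observation: the center $S$ is a \emph{divisor} in $H$ even though it has codimension $2$ in $\p^n$, so $\sigma\big\vert_{H_V}$ is an isomorphism onto $H$.

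The main obstacle I anticipate is the bookkeeping of divisor classes, specifically confirming that $(3H^* - E)\big\vert_{H_V}$ is genuinely trivial rather than merely torsion or negative. The subtlety is that $E\big\vert_{H_V}$ must be correctly identified with the class $3H\big\vert_{H}$ under the isomorphism $H_V \simeq H$; this rests on the fact that $E \cap H_V$ is exactly the locus over $S$, and that $S$ is cut out on $H$ by the degree-$3$ form $F$. Once this restriction computation is pinned down, contraction of $H_V$ to a point follows because a nef divisor restricting to zero on an irreducible subvariety contracts it, and the only remaining task is to confirm the image really is a single point (not a positive-dimensional contraction), which follows since $3H^* - E$ is big and $\varphi$ is birational, so its exceptional locus is a proper closed subset whose components are contracted to lower-dimensional loci — here, $H_V$ to a point.
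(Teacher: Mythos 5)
Your proof is correct, and for the birationality half it is essentially the paper's argument (the paper phrases ``cubics through $S$ separate points off $H$'' as: on the affine chart $\p\setminus H$ the map restricts to the $2$-uple Veronese embedding, since the sections $wB+cF$ restrict to $B+cF$ with $B$ an arbitrary quadric). For the contraction half you take a genuinely different route. You show directly that $(3H^*-E)\big\vert_{H_V}$ is trivial --- using the correct observations that $\sigma\big\vert_{H_V}:H_V\map H$ is an isomorphism because $S$ is a Cartier divisor in $H$, and that $E\big\vert_{H_V}$ is the class of the cubic $S=(F=0)$ in $H\simeq\p^{n-1}$ --- so the semiample class defining $\varphi$ has degree zero on every curve in $H_V$ and $\varphi(H_V)$ is a point. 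The paper instead argues in the opposite direction: it takes a curve $Z\not\subset H$ with $\sigma_*^{-1}Z$ contracted, derives $3H\cdot Z=\deg(S\cap Z)\le H\cdot Z$, and concludes that the exceptional locus is \emph{contained} in $H_V$, leaving the triviality of $(3H^*-E)\big\vert_{H_V}$ (and the isomorphism $H_V\simeq\p^{n-1}$) implicit. Your version proves exactly what the lemma asserts and justifies $H_V\simeq\p^{n-1}$, which the paper only states; the paper's version delivers the extra byproduct that \emph{nothing outside} $H_V$ is contracted, which is what is actually used afterwards to conclude that $Y$ is smooth away from $o=\varphi(H_V)$. One small caveat: your closing remark that contraction to a point ``follows since $3H^*-E$ is big and $\varphi$ is birational, so components of the exceptional locus are contracted to lower-dimensional loci'' would not by itself force the image to be a point; but this is harmless, since your earlier computation that the restricted class is trivial on the irreducible $H_V$ already does the job.
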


\begin{proof}
By construction of $|3H - S|$ the map $\varphi$ coincides with the
Veronese embedding (with respect to $2H$) on an affine open subset
in $\p$. Hence $\varphi$ is birational.

Now let $Z\subset\p$ be the image of a curve contracted by
$\varphi$. Suppose that $Z\not\subset H$. Then, since
$\sigma_*^{-1}Z$ is contracted by $\varphi$, we have $3H\cdot Z =
\deg S\cdot Z$. On the other hand, we obviously have $\deg S\cdot
Z\le H\cdot Z$, a contradiction. Thus every curve contracted by
$\varphi$ belongs to $H_V\simeq\p^{n-1}$.
\end{proof}

Note that $Y$ has exactly one singular point (cf.
Lemma~\ref{theorem:c-4-z-2-sing} below). More precisely,
$\varphi\circ\sigma^{-1}$ induces an isomorphism between
$\p\setminus H\simeq\com^n$ and $Y\setminus\varphi(E)$, so that
$Y$ can be singular only at the point $o := \varphi(H_V)$ on the
boundary $\varphi(E)$ (cf. Lemma~\ref{theorem:phi-is-bir-and-h}).

Further, we want to modify $Y$ into a \emph{smooth} $n$\,-\,fold
(our pertinent $X$), yet preserving the properties $\com^n\subset
Y$ and $\text{Pic}\,Y = \cel$. Let us start with the following
technical observation:

\begin{lemma}
\label{theorem:c-4-z-2-sing} Singularity $o\in Y$ is locally
analytically of the form $\com^n\slash\mu_2$ for the
$2$\,-\,cyclic group $\mu_2$ acting diagonally on $\com^n$.
\end{lemma}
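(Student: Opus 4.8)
The plan is to reduce the statement to a normal-bundle computation for the contracted divisor and then to recognize the resulting contraction. By Lemma~\ref{theorem:phi-is-bir-and-h} the morphism $\varphi$ is an isomorphism away from $o$ and collapses precisely $H_V\simeq\p^{n-1}$ to $o$; hence the analytic germ $(Y,o)$ is the Remmert reduction obtained by contracting $H_V$ inside a neighborhood of it in $V$. Thus everything splits into two tasks: (i) compute $N:=N_{H_V/V}$, and (ii) show that a neighborhood of $H_V$ in $V$ is analytically the total space of $N$, so that the contraction is the standard one. I expect task (ii) to carry the real content.

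For (i), I would write $\mathcal{O}_V(H_V)=\sigma^*\mathcal{O}_{\p}(H)\otimes\mathcal{O}_V(-E)$, using $\sigma^*H=H_V+E$ (the center $S$ lies on $H$ with multiplicity one, since $w$ is part of a minimal generating set of $\mathcal{O}_S$). Restricting to $H_V$ and noting that $\sigma|_{H_V}\colon H_V\to H$ is an isomorphism (the blowup of $H$ along the Cartier divisor $S\subset H$), one gets $N=\mathcal{O}_{\p^{n-1}}(1)\otimes\mathcal{O}_V(-E)|_{H_V}$. In the blowup chart $w=u'v',\ F=u'$ one checks that $E=(u'=0)$ and $H_V=(v'=0)$ meet transversally, so that $E\cap H_V$ is the reduced divisor $(u'=0)|_{H_V}$, which under $H_V\cong H\cong\p^{n-1}$ is exactly the cubic $S$, of class $\mathcal{O}_{\p^{n-1}}(3)$. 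Hence $\mathcal{O}_V(E)|_{H_V}=\mathcal{O}_{\p^{n-1}}(3)$ and $N\cong\mathcal{O}_{\p^{n-1}}(1)\otimes\mathcal{O}_{\p^{n-1}}(-3)=\mathcal{O}_{\p^{n-1}}(-2)$.

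For (ii), recall that the total space of $\mathcal{O}_{\p^{n-1}}(-2)$ contracts along its zero section to $\mathrm{Spec}\,\bigoplus_{k\ge0}H^0(\p^{n-1},\mathcal{O}(2k))$, i.e. to the affine cone over the $2$-uple Veronese image of $\p^{n-1}$, which is precisely $\com^n\slash\mu_2$ with $\mu_2$ acting diagonally. So it remains to prove that the analytic neighborhood of $H_V$ in $V$ is \emph{linearizable}, i.e. isomorphic to a neighborhood of the zero section of $N$. This is the main obstacle, and I would settle it by formal linearization order by order: the obstruction to extending the identification of the $k$-th infinitesimal neighborhoods lies in $H^1\bigl(\p^{n-1},T_V|_{H_V}\otimes\mathrm{Sym}^kN^\vee\bigr)$, which is squeezed between $H^1(\p^{n-1},T_{\p^{n-1}}(2k))$ and $H^1(\p^{n-1},\mathcal{O}(2k-2))$ for $k\ge2$. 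The latter group vanishes once $n-1\ge2$, while the former vanishes once $n-1\ge3$ (by the Euler sequence together with $H^1(\p^{n-1},\mathcal{O}(j))=H^2(\p^{n-1},\mathcal{O}(j))=0$); both conditions hold exactly because $n\ge4$, which is one place where that hypothesis is genuinely used. The formal neighborhood is therefore standard, Grauert's theorem upgrades this to an analytic isomorphism with the total space of $N$, and collapsing the zero section yields $(Y,o)\cong(\com^n\slash\mu_2,0)$ as claimed.
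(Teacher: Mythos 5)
Your argument is correct, but it takes a genuinely different route from the paper's. The paper never computes the normal bundle of $H_V$: it chooses divisors $D_1,\dots,D_n$ on $Y$ with $\varphi_*^{-1}D_i\sim H^*$ so that $K_V+\sum_i\varphi_*^{-1}D_i+H_V=0$ and the pair $(V,\sum_i\varphi_*^{-1}D_i+H_V)$ is log canonical, descends log canonicity to $(Y,\sum_i D_i)$ via \cite[Lemma 3.38]{kol-mor}, invokes \cite[18.22]{kollar-92} to conclude that $o\in Y$ is a toric, hence diagonal cyclic quotient, singularity $\com^n\slash\mu_m$, and finally pins down $m=2$ geometrically: the smooth surface $\varphi_*^{-1}D_1\cap\dots\cap\varphi_*^{-1}D_{n-2}$ is a plane blown up at the three points where its trace on $H$ meets the cubic $S$, so it meets $H_V$ along a $(-2)$-curve, whose contraction is an $A_1$-point. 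You instead work entirely locally along $H_V$: your computation $N_{H_V/V}=(\sigma^*H-E)\big\vert_{H_V}=\mathcal{O}_{\p^{n-1}}(1)\otimes\mathcal{O}_{\p^{n-1}}(-3)=\mathcal{O}_{\p^{n-1}}(-2)$ is right (and is consistent with the intersection number $H_V\cdot\pi(Z)=-2$ used in the proof of Lemma~\ref{theorem:x-is-smooth}), the obstruction groups you name do vanish on $\p^{n-1}$, and passing from the formal linearization to the analytic model of the germ is legitimate --- though the correct attribution is the formal principle for \emph{exceptional} sets (Hironaka--Rossi, building on Grauert), so that it is the exceptionality of $H_V$, i.e. Lemma~\ref{theorem:phi-is-bir-and-h}, that licenses this step, not negativity of $N$ alone. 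As for what each route buys: yours uses no data about $V$ or auxiliary divisors away from $H_V$ and needs only the divisor \emph{class} of $E\big\vert_{H_V}$ --- a welcome robustness, since for the explicit $F$ of {\ref{subsection:exa-0}} the cubic $S$ is in fact singular at the point $x_2=\dots=x_n=0$ of $H$, so $E$ is not smooth there and your ``transversality'' should be weakened to ``the scheme-theoretic intersection $E\cap H_V$ is the reduced cubic $S$'', which is all you actually use --- and it outputs the scalar $\mu_2$-action at once; the paper's route is shorter modulo its MMP-style references and stays purely algebraic, but it only yields ``cyclic toric of some order'' abstractly and still needs the surface-section trick to identify the order. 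Two small repairs to yours: the induction must also get started, i.e. you need the $k=1$ vanishing $H^1(\p^{n-1},T_{\p^{n-1}}(2))=0$ (true, by the same Euler-sequence argument), and the parenthetical justification of $\sigma^*H=H_V+E$ should simply say that $H$ is smooth and contains $S$, so the multiplicity of $H$ along the center is one.
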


\begin{proof}
Recall that $K_{V} = -(n+1)H^* + E$ and $\varphi$ contracts $H_V =
\sigma_*^{-1}H \sim H^* - E$ to the point $o$.

One can choose such divisors $D_1,\ldots,D_n$ on $Y$ that
$\varphi_*^{-1}D_i\sim H^*$ for all $i$ and the pair
$$(V,\sum_{i=1}^n \varphi_*^{-1}D_i +
H_V)$$ is log canonical. Note also that
$$K_{V} + \sum_{i=1}^n \varphi_*^{-1}D_i +
H_V = 0.$$ Then we apply \cite[Lemma 3.38]{kol-mor} to deduce that
the pair $(Y,\displaystyle\sum_{i=1}^n D_i)$ is log canonical.

It now follows from \cite[18.22]{kollar-92} that $o \in Y$ is a
toric singularity. In particular, it is of the form
$\com^n\slash\mu_m$ for a cyclic group $\mu_m$ acting diagonally
on $\com^n$, and it remains to show that $m = 2$.

For the latter, notice that $\sigma(\varphi_*^{-1}D_i)$ are
hyperplanes on $\p$, with the plane
$\sigma(\varphi_*^{-1}D_1)\cap\ldots\cap\sigma(\varphi_*^{-1}D_{n-2})$,
say, intersecting the cubic $S$ at exactly $3$ distinct points.
This implies that $H_V \cap \varphi_*^{-1}D_1\cap\ldots\cap
\varphi_*^{-1}D_{n-2}$ is a $(-2)$\,-\,curve on the smooth surface
$\varphi_*^{-1}D_1\cap\ldots\cap \varphi_*^{-1}D_{n-2}$ and the
equality $m = 2$ follows by varying $D_i$.
\end{proof}

Choose some smooth hypersurface $R \in |3(3H^* - E)|$, with $R
\cap H_V = \emptyset$ (cf. Lemma~\ref{theorem:phi-is-bir-and-h}),
and let $\pi:\tilde{V} \map V$ be the double covering ramified in
$R + H_V \sim 10H^* - 4E$. Variety $\tilde{V}$ is smooth and we
have
$$
-K_{\tilde{V}} = -\pi^*(K_V + \frac{1}{2}(R + H_V)) =
\pi^*((n-4)H^* + E) := (n-4)\tilde{H} + \tilde{E}
$$
by Hurwitz formula, where $\tilde{H}$ and $\tilde{E}$ are the
pullbacks to $\tilde{V}$ of $H^*$ and $E$, respectively.

It is immediate from the construction that the group
$\mathrm{Pic}\,\tilde{V}$ is generated by
$\mathcal{O}_{\tilde{V}}(\pi^{-1}H_V)$ and
$\mathcal{O}_{\tilde{V}}(\tilde{E})$ (note that $\pi^*H_V =
2\pi^{-1}H_V$ because $\pi$ ramifies in $H_V$). Indeed, since
$\mathcal{O}_V(H^*)$ and $\mathcal{O}_V(E)$ generate
$\mathrm{Pic}\,V$, with intersections $H^* \cap R$ and $E \cap R$
being irreducible (same for $H^* \cap H_V$ and $E \cap H_V$), the
line bundles $\mathcal{O}_{\tilde{V}}(\pi^{-1}H_V)$ and
$\mathcal{O}_{\tilde{V}}(\tilde{E})$ are the claimed generators of
$\mathrm{Pic}\,\tilde{V}$.

\begin{lemma}
\label{theorem:x-is-smooth} There exists a birational contraction
$f: \tilde{V} \map X$ of $\pi^{-1}H_V$, given by a multiple of the
linear system $|\pi^*(3H^* - E)|$, onto some \emph{smooth} variety
$X$.
\end{lemma}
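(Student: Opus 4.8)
The plan is to realize $X$ as the image of $\tilde{V}$ under the map given by $|\pi^*(3H^* - E)|$ (or a suitable multiple), and then to check that this map contracts \emph{exactly} the divisor $\pi^{-1}H_V$ to a point, yielding a smooth target. First I would note that the linear system $|3H^* - E|$ on $V$ defines the morphism $\varphi: V \map Y$ of Lemma~\ref{theorem:phi-is-bir-and-h}, which is birational and contracts $H_V \simeq \p^{n-1}$ to the singular point $o \in Y$ of type $\com^n\slash\mu_2$ (Lemma~\ref{theorem:c-4-z-2-sing}). Pulling back via $\pi$, the system $|\pi^*(3H^* - E)|$ then induces a morphism $f = \varphi' \circ \pi$-type factorization whose only positive-dimensional fiber lies over $o$, so $f$ contracts precisely $\pi^{-1}H_V$ and is an isomorphism elsewhere (on $\tilde{V}\setminus\pi^{-1}H_V$, matching the isomorphism $\p\setminus H \simeq Y\setminus\varphi(E)$ away from the boundary).

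The crux is smoothness of $X$ at the image point $x_0 := f(\pi^{-1}H_V)$, and this is where the double cover was engineered. The key computation is local-analytic: near $o$ the variety $Y$ looks like $\com^n\slash\mu_2$ with $\mu_2$ acting diagonally by $\pm 1$. The ramification divisor of the double cover was chosen to pass through $H_V$ (indeed $R\cap H_V = \emptyset$ forces the ramification over $o$ to come entirely from the $H_V$-component of $R + H_V \sim 10H^* - 4E$). Thus near the preimage of $o$, the cover $X \map Y$ is the double cover of $\com^n\slash\mu_2$ branched along the image of the coordinate hyperplanes/ramification locus, and the standard fact is that the double cover of the quotient singularity $\com^n\slash\mu_2 = \mathrm{Spec}\,\com[x_1,\ldots,x_n]^{\mu_2}$ branched appropriately \emph{unwinds} the $\mu_2$-action, recovering the smooth $\com^n$. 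Concretely, the invariant ring is generated by the degree-$2$ monomials, and adjoining the square root $\sqrt{P}$ of a generator corresponding to the branch divisor returns the original (smooth) affine space; this is the mechanism flagged in {\ref{subsection:intro-1}}.

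The main obstacle I expect is precisely verifying this local smoothing rigorously: one must confirm that the branch divisor of $X \map Y$ meets the singular point $o$ in exactly the way that resolves the $\mu_2$-quotient rather than leaving a singularity or introducing a new one. I would handle this by working in the explicit affine chart where $Y$ is $\com^n\slash\mu_2$, identifying the local equation of the ramification $R + H_V$, and checking that in the double cover the local ring becomes regular --- equivalently, that the minimal resolution pattern (the $(-2)$-curve found at the end of Lemma~\ref{theorem:c-4-z-2-sing}) gets ``undone'' by passing to the cover. Since $\pi$ ramifies along $H_V$ with $\pi^*H_V = 2\pi^{-1}H_V$, the divisor $\pi^{-1}H_V$ is a $\p^{n-1}$ with normal bundle made Cartier and negative after the twist by $\pi^*$, so $f$ contracts it to a \emph{Cartier} (hence, by the local model, smooth) point. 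Finally I would record that $X$ so obtained has $\mathrm{Pic}\,X = \cel\cdot\mathcal{O}_X(\Gamma)$ with $\Gamma$ the image of $\tilde{E}$, completing the construction; the identification $X\setminus\Gamma\simeq\com^n$ is then the separate arithmetic input about $\mathrm{Spec}\,\com[\com^n][\sqrt{P}]$ deferred to the companion results.
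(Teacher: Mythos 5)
Your overall route---defining $f$ via the semiample system $|\pi^*(3H^*-E)|$ and then deducing smoothness from the induced double cover $g: X \map Y$ that ``unwinds'' the $\com^n\slash\mu_2$-singularity---is genuinely different from the paper's, and it can be made to work; but your verification of the crux (smoothness at $x_0 := f(\pi^{-1}H_V)$) is set up incorrectly. Near $o$ the cover $g$ is \emph{not} branched along any divisor: $R$ is disjoint from $H_V$, and $H_V$ itself is contracted by $\varphi$ to the point $o$, so the branch locus of $g$ near $o$ is the single point $o$, of codimension $n \ge 4$. Hence there is no ``local equation of the ramification'' whose square root you can adjoin, and the concrete model you propose is false as stated: adjoining to $\com[x_1,\ldots,x_n]^{\mu_2}$ the square root of one invariant generator does \emph{not} return $\com^n$ (already for $n=2$, $\com[x^2,xy,y^2][x] \simeq \com[x,u,v]/(u^2 - x^2v)$ is non-normal and singular); the smoothing cover $\com^n \map \com^n\slash\mu_2$ is \'etale in codimension one, so a root cover along a divisor is the wrong local picture. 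Your fallback inference ``normal bundle negative after the twist, so $f$ contracts $\pi^{-1}H_V$ to a Cartier, hence smooth, point'' is also invalid: negativity of the normal bundle gives contractibility, never smoothness of the target---$H_V \subset V$ itself has normal bundle $\mathcal{O}(-2)$ and contracts to the \emph{singular} point $o$.

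There are two clean repairs. (i) Keep your covering-space picture but argue by purity of the branch locus: first note that $\varphi\circ\pi$ contracts exactly the curves contracted by $f$, so by rigidity it factors as $g\circ f$ with $g: X \map Y$ finite of degree $2$; near $o$ this $g$ is connected and \'etale outside $o$, and since the local fundamental group of $\com^n\slash\mu_2\setminus\{0\}$ is exactly $\cel/2$ for $n \ge 2$, the only such cover is the quotient map $\com^n \map \com^n\slash\mu_2$, whence $X$ is smooth at $x_0$. (ii) Avoid $Y$ altogether, which is what the paper does: for a line $Z \subset \pi^{-1}H_V \simeq \p^{n-1}$ one has $K_{\tilde{V}}\cdot Z < 0$, so \cite[Theorem 3.25]{kol-mor} delivers the contraction $f$, and then Lemma~\ref{theorem:c-4-z-2-sing} gives
$$
\pi^{-1}H_V\cdot Z = \frac{1}{2}\pi^*H_V\cdot Z = \frac{1}{2}H_V\cdot \pi(Z) = -1,
$$
i.e. the exceptional $\p^{n-1}$ has normal bundle $\mathcal{O}(-1)$, so $f$ is the inverse of the blowup of a smooth point. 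This one-line computation is the precise sense in which passing to the double cover ``halves'' the normal bundle from $\mathcal{O}(-2)$ to $\mathcal{O}(-1)$; it is the numerical fact your proposal needs but never establishes.
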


\begin{proof}
Let $Z\subset\pi^{-1}H_V\simeq\p^{n-1}$ be a line. We have
$$
K_{\tilde{V}} \cdot Z = -((n-4)\tilde{H} + \tilde{E}) \cdot Z = 3
- n < 0.
$$
Then \cite[Theorem 3.25]{kol-mor} delivers the contraction $f$ as
stated. Finally, Lemma~\ref{theorem:c-4-z-2-sing} yields
$$
\pi^{-1}H_V\cdot Z = \frac{1}{2}\pi^*H_V\cdot Z = \frac{1}{2}H_V
\cdot \pi(Z) = -1,
$$
which implies that $f$ is just the blowup of the smooth point
$f(\pi^{-1}H_V) \in X$.
\end{proof}

It follows from Lemma~\ref{theorem:x-is-smooth} that $X$ is a
smooth Fano $n$\,-\,fold of index $n - 3$. Namely, we have
$$
-K_X = (n - 3)f_*\tilde{H} = (n - 3)f_*\tilde{E},
$$
for $\mathrm{Pic}\,X = \cel \cdot \mathcal{O}_X(f_*\tilde{E})$.

Let us now find those curves on $X$ having the smallest
intersection number with $f_*\tilde{E}$:

\begin{prop}
\label{theorem:small-deg-curves-on-x} For every curve $Z \subset
X$ we have $f_*\tilde{E} \cdot Z \ge 1$ and equality is achieved
when $\sigma(\pi(f_*^{-1}Z))$ is a point on $\p$. In other words,
$f_*^{-1}Z \subset \tilde{E}$ is an elliptic curve, which is the
preimage of a ruling on $E$.
\end{prop}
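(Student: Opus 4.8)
The plan is to reduce everything to intersection numbers on $\tilde V$ and exploit that $L := f_*\tilde E$ is the \emph{ample generator} of $\mathrm{Pic}\,X \simeq \cel$. The lower bound is then immediate: since $-K_X = (n-3)L$ is ample and $n-3 \ge 1$, the divisor $L$ itself is ample, and as it generates $\mathrm{Pic}\,X \simeq \cel$ the number $f_*\tilde E \cdot Z = L\cdot Z$ is a positive integer for every curve $Z$, giving $f_*\tilde E \cdot Z \ge 1$ at once.

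For the equality case I would pull $Z$ back to $\tilde V$. Writing $C := f_*^{-1}Z$ and recalling from Lemma~\ref{theorem:x-is-smooth} that $f$ is the blowup of a smooth point with exceptional divisor $\mathcal E = \pi^{-1}H_V \simeq \p^{n-1}$ (so that $\mathcal E\cdot(\text{line}) = -1$ and $N_{H_V/V} = \mathcal O(-2)$), I would first determine $f^*L$. Combining these data with $\tilde H = \tilde E + 2\mathcal E$ yields $f^*L = \tilde H + \mathcal E = \tilde E + 3\mathcal E$, which one can cross-check against $K_{\tilde V} = f^*K_X + (n-1)\mathcal E$ and $-K_{\tilde V} = (n-4)\tilde H + \tilde E$. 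By the projection formula, $f_*\tilde E \cdot Z = (\tilde H + \mathcal E)\cdot C$. Now $\tilde H = \pi^*\sigma^*H$ is nef and $C\not\subset\mathcal E$ (as $Z = f(C)$ is a curve), so $\tilde H\cdot C \ge 0$ and $\mathcal E\cdot C \ge 0$; hence equality $f_*\tilde E\cdot Z = 1$ forces $(\tilde H\cdot C,\ \mathcal E\cdot C) = (0,1)$ or $(1,0)$.

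In the first case $\tilde H\cdot C = 0$ says exactly that $\sigma\pi(C)$ is contracted, i.e. $\sigma(\pi(f_*^{-1}Z))$ is a point $q\in\p$. Since $C$ is a curve, $q$ cannot lie off $S$ (there the fibre of $\sigma\pi$ is finite), so $q\in S$ and $C$ lies in $\pi^{-1}(\ell_q)$ for the ruling $\ell_q\subset E$ over $q$. As $R\cdot \ell_q = 3(3H^*-E)\cdot\ell_q = 3$ and $H_V\cdot\ell_q = (H^*-E)\cdot\ell_q = 1$, the double cover $\pi^{-1}(\ell_q)\map \ell_q\simeq\p^1$ is branched in $4$ points, hence is an irreducible elliptic curve; thus $C = \pi^{-1}(\ell_q)\subset\tilde E$ is the preimage of a ruling on $E$, precisely as asserted, and $\tilde E\cdot C = E\cdot \pi_*C = 2(E\cdot\ell_q) = -2$, $\mathcal E\cdot C = 1$ recover $f_*\tilde E\cdot Z = -2 + 3 = 1$.

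The main obstacle is ruling out the second case $(\tilde H\cdot C,\ \mathcal E\cdot C) = (1,0)$. Here I would argue by parity: $\tilde H\cdot C = \sigma^*H\cdot\pi_*C$ is even whenever $\pi|_C$ has degree $2$, so $\tilde H\cdot C = 1$ forces $\pi|_C$ birational onto $D := \pi(C)$, with $\sigma(D) = \bar C$ a line $\ell_0$ meeting $S$. A birational $\pi|_C$ with $C\not\subset\mathcal E$ means either $\pi^{-1}(D)$ splits—impossible, since $D\cdot R = 3(3H^*-E)\cdot D \ge 6 \ne 0$—or $C\subset R' := \pi^{-1}(R)_{\mathrm{red}}$, i.e. $D = \sigma_*^{-1}\ell_0\subset R$. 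It then remains to show that \emph{no} line $\ell_0$ off $H$ meeting $S$ has its proper transform contained in $R$. I expect this to follow from the freedom in choosing $R$ generic in $|3(3H^*-E)|$ subject to $R\cap H_V = \emptyset$, together with the explicit shape $F = x_1x_3^2 + x_2^2x_4 + F_3$ that governs the lines incident to $S$; this genericity step is the delicate point of the argument. Once it is settled, only the first case survives and the Proposition follows.
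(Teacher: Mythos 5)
Your lower bound (via $f_*\tilde E$ being the ample generator of $\mathrm{Pic}\,X\simeq\cel$), your formula $f^*f_*\tilde E=\tilde H+\mathcal E=\frac12(3\tilde H-\tilde E)$, and your analysis of the case $\tilde H\cdot C=0$ (the preimage of a ruling is a double cover of $\ell_q\simeq\p^1$ branched in the four points of $\ell_q\cap(R+H_V)$, hence an elliptic curve of $f^*f_*\tilde E$-degree one) are correct and coincide, up to the change of basis from $(\tilde H,\mathcal E)$ to $(\tilde H,\tilde E)$, with the paper's computation $f^*(-K_X)=\frac{n-3}{2}(3\tilde H-\tilde E)$, $\tilde H\cdot f_*^{-1}Z=0$, $\tilde E\cdot f_*^{-1}Z=-2$. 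Note, though, that this is all the paper's proof contains: it establishes the implication ``ruling $\Rightarrow$ degree one'' and nothing more, and only this implication (together with the fact that the Mori cone of $\tilde V$ is spanned by the class $[\ell]$ of a line $\ell\subset\pi^{-1}H_V$ and the elliptic class $[C']$) is used later, in Corollary~\ref{theorem:small-deg-curves-on-x-cor}.

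The converse, which you single out as ``the main obstacle'', is where your argument has a genuine flaw --- and it is not the step you flagged. The dismissal ``$\pi^{-1}(D)$ splits --- impossible, since $D\cdot R\ne 0$'' is incorrect: for the double cover of $D\simeq\p^1$ determined by $R+H_V$, splitting is equivalent to $(R+H_V)\big\vert_D$ being \emph{twice} an integral divisor, i.e. to all local intersection multiplicities being even; a binary sextic which is a perfect square has degree $6\ne 0$, so the total intersection number rules out nothing. Worse, a parameter count suggests this case genuinely occurs: lines $\ell_0$ with $\ell_0\cap H\in S$ form a family of dimension $2n-3$, while total tangency of $D=\sigma_*^{-1}\ell_0$ to $R$ imposes only $3$ conditions, so for $n\ge 4$ one expects a $(2n-6)$-dimensional family of such lines, each of whose preimages breaks into two rational curves $C_1,C_2$ with $\tilde H\cdot C_i=1$, $\mathcal E\cdot C_i=0$, hence $f^*f_*\tilde E\cdot C_i=1$, and these do \emph{not} lie over rulings. (Their class is $[\ell]+[C']$, so the Mori-cone description and Corollary~\ref{theorem:small-deg-curves-on-x-cor} are unaffected.) Consequently your remaining genericity step --- that no proper transform of a line lies inside $R$ --- could not rescue the converse even if it were proved, and the biconditional reading of the Proposition should simply be dropped: what is true, what the paper proves, and what is needed downstream is the forward implication, which your first three paragraphs already establish correctly.
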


\begin{proof}
Notice first that
$$
-K_{\tilde{V}} = \frac{n-3}{2}(3\tilde{H} - \tilde{E}) -
\frac{n-1}{2}(\tilde{H} - \tilde{E}).
$$
In particular, we get $f^*(-K_X) =
\displaystyle\frac{n-3}{2}(3\tilde{H} - \tilde{E})$, and hence
$f_*\tilde{E} \cdot Z = a$ iff $-K_X \cdot Z = (n-3)a$ iff
$$
\frac{n-3}{2}(3\tilde{H} - \tilde{E}) \cdot f_*^{-1}Z = (n-3)a
$$
for any $a \in \cel$.

Further, if $\pi(f_*^{-1}Z)$ is a ruling on $E$, then $\tilde{H}
\cdot f_*^{-1}Z = 0$ by definition and
$$
\tilde{E} \cdot f_*^{-1}Z = \pi^*E \cdot f_*^{-1}Z = E \cdot
\pi_*(f_*^{-1}Z) = 2E \cdot \pi(f_*^{-1}Z) = -2
$$
by the projection formula, where $\pi(f_*^{-1}Z)$ has intersection
index $4$ with ramification divisor $R + H_V$, i.\,e. $f_*^{-1}Z$
is an elliptic curve. This implies that $a = 1$ for such $Z$ and
Proposition~\ref{theorem:small-deg-curves-on-x} follows.
\end{proof}

\begin{cor}
\label{theorem:small-deg-curves-on-x-cor} For every point $p \in
X$ we have $s(p) = 1$ when $p \in f_*(\tilde{E})$ and $s(p)\ge 2$
otherwise.
\end{cor}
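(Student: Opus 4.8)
The plan is to compute $s(p)$ through the dual description of the Seshadri constant as an infimum over curves, namely
$$
s(p)=\inf_{C\ni p}\frac{f_*\tilde{E}\cdot C}{\mathrm{mult}_p\,C},
$$
the infimum being taken over all irreducible curves $C\subset X$ through $p$. This is equivalent to \eqref{sesh-const} and to the defining nef\,-\,threshold, since $\beta^*(f_*\tilde{E})-\lambda\mathcal{E}$ fails to be nef exactly when it meets the strict transform of some such $C$ negatively. Thus the computation splits into producing, for the upper bounds, curves of small ratio through $p$, and, for the lower bounds, a uniform estimate $\mathrm{mult}_p\,C\le c\cdot(f_*\tilde{E}\cdot C)$ with $c=1$ on $\Gamma:=f_*\tilde{E}$ and $c=\tfrac12$ off it.

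The upper bound on $\Gamma$ is immediate from Proposition~\ref{theorem:small-deg-curves-on-x}. The elliptic curves $Z$ appearing there — the preimages under $\pi$ of the rulings of $E$ — satisfy $f_*\tilde{E}\cdot Z=1$ and sweep out all of $\Gamma$ as the ruling varies over $E$; being smooth they have $\mathrm{mult}_p\,Z=1$, so $s(p)\le 1$ for every $p\in\Gamma$. Conversely, no curve of $f_*\tilde{E}$\,-\,degree $1$ meets $U:=X\setminus\Gamma$, because Proposition~\ref{theorem:small-deg-curves-on-x} forces such a curve to lie inside $\tilde{E}$, hence inside $\Gamma$; therefore every $C$ through a point $p\in U$ already has $f_*\tilde{E}\cdot C\ge 2$, which is the source of the jump from $1$ to $2$.

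For the lower bounds I would transport an arbitrary curve $C$ down to $\p^n$ along $f$, $\pi$ and $\sigma$, setting $\bar{C}:=\sigma(\pi(f_*^{-1}C))$ and using $f^*(-K_X)=\tfrac{n-3}{2}(3H^*-E)$ together with the projection formula to write
$$
f_*\tilde{E}\cdot C=\frac{\delta}{2}\bigl(3\deg\bar{C}-E\cdot\sigma_*^{-1}\bar{C}\bigr),
$$
where $\delta\in\{1,2\}$ is the degree of $f_*^{-1}C\map\sigma_*^{-1}\bar{C}$ under $\pi$. At a point $p\in U$ lying away from the ramification $R+H_V$ the composite $U\map\com^n$ is an analytic isomorphism near $p$, so $\mathrm{mult}_p\,C=\mathrm{mult}_{\bar{p}}\,\bar{C}\le\deg\bar{C}$; the displayed identity, with $E\cdot\sigma_*^{-1}\bar{C}\le\deg\bar{C}$ since $S\subset H$, then yields $\mathrm{mult}_p\,C\le\tfrac12(f_*\tilde{E}\cdot C)$, i.e. $s(p)\ge 2$. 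The analogous but easier comparison near $\Gamma$, combined with the upper bound above, should give $s(p)=1$ there.

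The main obstacle is precisely the control of $\delta$ and of the contact of $\bar{C}$ with $S$: the estimate degrades exactly for curves $C$ whose strict transform lies over the ramification locus (so $\delta=1$) or meets the cubic $S$ with high multiplicity, and for points $p\in\Gamma$ sitting on the exceptional loci of $\sigma$, $\pi$ or $f$ — including the point $f(\pi^{-1}H_V)\in X$ blown up in Lemma~\ref{theorem:x-is-smooth} — where the local isomorphism $U\map\com^n$ breaks down and $\mathrm{mult}_p\,C$ can no longer be read off from $\bar{C}$. I expect that handling these boundary curves, by a direct analysis of the $\p^1$\,-\,bundle structure of $E$ and of the double cover $\pi$, is where the real work lies; the generic estimate above in fact gives ratio $3$, so only these degenerate families can pull the infimum down to the asserted value $2$.
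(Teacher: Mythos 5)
Your starting point --- the curve characterization $s(p)=\inf_{C\ni p}\,(f_*\tilde{E}\cdot C)/\mathrm{mult}_p\,C$ --- is legitimate, and your upper bound $s(p)\le 1$ on $f_*\tilde{E}$ via the sweeping family of elliptic curves is correct; it is in substance the same computation the paper performs (there phrased as $\beta^*f_*\tilde{E}-\lambda\mathcal{E}$ meeting the proper transform of $Z$ negatively for $\lambda>1$). The gaps are the two lower bounds, and they are not peripheral. First, $s(p)\ge 1$ on $f_*\tilde{E}$ is only asserted (``analogous but easier comparison''); the paper proves it by an exact nef-threshold computation at the distinguished point $p_0=f(\pi^{-1}H_V)$ --- by Lemma~\ref{theorem:x-is-smooth} the variety $\tilde{V}$ \emph{is} the blowup of $X$ at $p_0$, its Mori cone is spanned by a line of $\pi^{-1}H_V$ and the class of $Z$, so one reads off that $f^*f_*\tilde{E}-\lambda\pi^{-1}H_V$ is nef exactly for $\lambda\le 1$ --- and then spreads the bound over the rest of $f_*\tilde{E}$ by lower semicontinuity. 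Second, and decisively: your inequality $\mathrm{mult}_p\,C\le\tfrac12\,f_*\tilde{E}\cdot C$ for $p\notin f_*\tilde{E}$ is established only when $\delta=2$, and you misjudge how often $\delta=1$ occurs. It is not a degenerate phenomenon confined to curves lying over the ramification: \emph{every} curve not invariant under the Galois involution of the double cover $\pi$ maps birationally onto its image and hence has $\delta=1$, and this is the \emph{generic} case (through a point off the ramification, among affine lines of the chart $\com^n=\mathrm{Spec}\,\com[T,y_1,\dots,y_{n-1}]$ only the $T$-direction line is invariant). For all such curves your estimate yields only $f_*\tilde{E}\cdot C\ge\deg\bar{C}\ge\mathrm{mult}_p\,C$, i.e.\ ratio $\ge 1$, and nothing in the proposal excludes a non-invariant (or split, or ramified) curve whose image has multiplicity at $\sigma\circ\pi(p)$ larger than half of $3\deg\bar{C}-E\cdot\sigma_*^{-1}\bar{C}$. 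Since a Seshadri lower bound must survive \emph{every} curve through $p$, what you defer as ``where the real work lies'' is in fact the entire content of the corollary.

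The paper's proof avoids this trap by extracting its lower bound from divisors rather than curves, via \eqref{sesh-const}: the hypersurfaces $\Sigma'=(F+wB=0)$, with $B$ ranging over all quadratic forms, have multiplicity $2$ at $\sigma\circ\pi(p)$, form a linear system with isolated base locus near $p$, and pull back to elements of $|f_*\tilde{E}|$; choosing a member not containing a given curve $C\ni p$ gives $f_*\tilde{E}\cdot C\ge 2\,\mathrm{mult}_p\,C$ for \emph{all} $C$ at once, including the non-invariant and split curves your inequality cannot see. This asymmetry is the structural lesson: curves give cheap \emph{upper} bounds for Seshadri constants, whereas \emph{lower} bounds call for linear systems with isolated base locus. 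To complete your route you would in effect have to prove a uniform degree--multiplicity inequality for every curve in $\p^n$, keyed to its contact with $S$ and its splitting behaviour over the branch cubic --- which is exactly what the paper's divisor construction delivers in one stroke; so that construction is the missing key idea, not an optional alternative.
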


\begin{proof}
Consider the case when $p = f(\pi^{-1}H_V)\in f_*\tilde{E}$ first.
Note that the Mori cone $\overline{NE}(\tilde{V}) \subset
N_1(\tilde{V})\otimes\mathbb{R} = \mathbb{R}^2$ is generated by
the classes of a line in $\pi^{-1}H_V\simeq\p^{n-1}$ and an
elliptic curve $Z \subset \tilde{E}$ as in
Proposition~\ref{theorem:small-deg-curves-on-x}. Now, by
construction of $\tilde{V}$ via the blowup $f$ of $X$ at $p$ (see
Lemma~\ref{theorem:x-is-smooth}) we obtain that $s(p) = 1$, since
divisor $\tilde{H}$ is nef and
$$
f^*f_*\tilde{E} - \lambda\pi^{-1}H_V = (\frac{3}{2} -
\frac{\lambda}{2})\tilde{H} + \frac{1}{2}(\lambda - 1)\tilde{E}
$$
is nef only when $\lambda \le \displaystyle 1$. Then the estimate
$s(p) \ge 1$ holds for any other $p\in f_*\tilde{E}$ due to the
lower semi\,-\,continuity of the function $s(\cdot)$ on $X$ (see
\cite[Example 5.1.11]{laz-pos-in-ag-I}). But $s(p) > 1$ can not
occur for these $p$ because otherwise the divisor
$\beta^*f_*\tilde{E} - \lambda \mathcal{E}$ (we are using the
notation of {\ref{subsection:intro-1}}), with $\lambda
> 1$, intersects the curve $\sigma_*^{-1}Z$ as $1 - \lambda < 0$.
Thus $s(\cdot) = 1$ identically on $f_*\tilde{E}$.

Recall further that $\pi$ when considered on $\tilde{V} \setminus
\pi^{-1}H_V \cup \tilde{E} = X \setminus f_*\tilde{E}$ is the
double cover of $V \setminus H_V \cup E\simeq\com^n$ ramified in
$R$. Also, the proper transform on $\tilde{V}$ of any element
$\Sigma\in |mf_*\tilde{E}|,m\in\cel$, is an element from
$|\displaystyle\frac{m}{2}(3\tilde{H} - \tilde{E})|$ which maps
(via $\sigma\circ\pi$) onto some $\Sigma' \in |m(3H - S)|$ on
$\p$. In particular, we get
\begin{equation}
\label{mult-est-sigma} \mathrm{mult}_p\,\Sigma =
\mathrm{mult}_{\sigma\circ\pi(p)}\,\Sigma' \qquad \text{or}\qquad
\ge \mathrm{mult}_{\sigma\circ\pi(p)}\,\Sigma'
\end{equation}
as long as $p\not\in f_*\tilde{E}$ (for $p\in X$ identified with
$f^{-1}(p)\in \tilde{V}$), depending on whether $p\not\in R$ or
$p\in R$, respectively.

Now take $m = 1$ and $\Sigma'\in |3H - S|$ satisfying
$\mathrm{mult}_{\sigma\circ\pi(p)}\,\Sigma' = 2$. Such $\Sigma'$
vary in a linear system on $\p$ with isolated base locus near
$p$.\footnote{~Indeed, if $x_1,\ldots,x_n,w$ are projective
coordinates on $\p$, with $H = (w = 0)$ and $S = (w =
F(x_1,\ldots,x_n) = 0)$ as in {\ref{subsection:exa-0}}, then we
consider $\Sigma' := (F + wB = 0)$ for an arbitrary quadratic form
$B = B(x_1,\ldots,x_n)$ and $p := [0:\ldots:0:1]$. The case of
arbitrary $p \in \com^n$ is easily reduced to this one.} This and
\eqref{mult-est-sigma} (cf. \eqref{sesh-const}) imply that $s(p)
\ge 2$ for $f_*\tilde{E} \sim \Sigma$.
\end{proof}

\refstepcounter{equation}
\subsection{}
\label{subsection:exa-1}

It remains to show that $X\setminus f_*\tilde{E}\simeq\com^n$ for
one particular $R$.

Identifying $\p \setminus H = V \setminus H_V \cup E = Y \setminus
\varphi(E)$ with $\com^n = \p^n \cap (w = 1)$ via $\sigma,\varphi$
we observe that there are elements $y_1,\ldots,y_n$ in $|3H^* -
E|$, depending on the affine coordinates $x_i$, for which the
assignment $x_i \mapsto y_i$, $1\le i\le n$, induces an
automorphism on $\com^n = \varphi\circ\sigma^{-1}(\com^n)$.
Namely,
$$
y_1 := x_1 + F,\ y_2 := x_2,\ y_3 := x_2x_3,\ \ldots, \ y_n :=
x_2x_n
$$
satisfy this property, since one has induced isomorphism
$\com(y_1,\ldots,y_n) \simeq \com(x_1,\ldots,x_n)$ by the choice
of $F$ in {\ref{subsection:exa-0}}. This also shows (as
$\sigma^*F\big\vert_E \ne 0$ identically) that one may assume
$y_i\big\vert_E \ne 0$ identically for all $i$.

Further, the equation of $R$ on $V\setminus H_V \cup E$ is a cubic
polynomial in $y_i$, and we may take
$$
R \cap (V \setminus H_V \cup E) := (P(y_1,\ldots,y_{n-1}) + y_n +
1 = 0)
$$
for some generic $P$. Notice that this defines a \emph{smooth}
hypersurface in $\com^n$.

Expressing $y_i$ in terms of $x_i$ we identify $R \cap (V
\setminus H_V \cup E)$ with a hypersurface in $\p\setminus H$.
Then compactifying via $w$, we obtain that $R \subset V$ can only
be singular at the locus $y_1 = \ldots = y_{n-1} = w = 0$, i.\,e.
precisely at $S$.

\begin{lemma}
\label{theorem:r-is-smooth} $R$ is smooth and $R \cap H_V =
\emptyset$.
\end{lemma}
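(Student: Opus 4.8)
The plan is to localize both assertions along the exceptional divisor $E$. Away from $E$ the map $\sigma$ is an isomorphism, and the discussion preceding the lemma already shows that the compactification $\bar R\subset\p$ of $R\cap(V\setminus H_V\cup E)$ is singular at most along $S$; hence $R$ is automatically smooth on $V\setminus E$. Moreover, if $G$ denotes the degree-$9$ equation of $\bar R$, then restricting to $H=(w=0)$ and using the shape of the lifts $Y_i$ of the $y_i$ (namely $Y_1=x_1w^2+F$, $Y_i=x_2x_iw$, $Y_0=w^3$) yields $G\big\vert_{w=0}=c\,F^3$, where $c$ is the coefficient of $y_1^3$ in $P$. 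For generic $P$ one has $c\ne 0$, so $\bar R\cap H=S$, and therefore $R\cap H_V\subseteq E\cap H_V$. Thus both $R\cap H_V=\emptyset$ and the smoothness of $R$ reduce to the behaviour of $R$ in a neighbourhood of $E$.

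For the disjointness I would argue through $\varphi$. By Lemma~\ref{theorem:phi-is-bir-and-h} the map $\varphi$ contracts $H_V$ to $o$, and since it is birational and contracts nothing outside $H_V$, one has $\varphi^{-1}(o)=H_V$. In the homogeneous coordinates coming from the sections lifting $1,y_1,\dots,y_n$ the image $o=\varphi(H_V)$ is the point at which the $y_1$-coordinate equals $1$ while $Y_0,y_2,\dots,y_n$ all vanish. Now $R=\varphi^{-1}(R_Y)$ for the cubic section $R_Y=\{\Phi=0\}$, with $\Phi=\sum_{d=0}^3 P_d(Y_1,\dots,Y_{n-1})\,Y_0^{3-d}+Y_nY_0^2+Y_0^3$ the homogenisation of $P(y_1,\dots,y_{n-1})+y_n+1$. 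Evaluating gives $\Phi(o)=P_3(1,0,\dots,0)=c\ne 0$, so $R_Y\not\ni o$ and hence $R\cap H_V=\varphi^{-1}(R_Y)\cap\varphi^{-1}(o)=\emptyset$. Equivalently, in the chart $wt_0=(wx_1^2+F)t_1$, $t_0=1$ (where $E=(F=0)$ and $H_V=(t_1=0)$), the proper transform meets $E$ along the tangent cone $\{\Psi=0\}$ of $\bar R$ along $S$, and the identity $\Psi\big\vert_{t_1=0}=c\ne 0$ shows that $R\cap E$ avoids the section $H_V\cap E=(t_1=0)$.

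It remains to establish the smoothness of $R$ along $E$. Here I would work in the two charts $wt_0=(wx_1^2+F)t_1$ of the footnote in {\ref{subsection:exa-0}}, in each of which $E$ is reduced and cut out by a single coordinate. Since $\bar R$ has multiplicity exactly $3$ along $S$ (the tangent cone $\Psi$ being nonzero for generic $P$), the proper transform is obtained by dividing $\sigma^*G$ by the cube of that coordinate, and one applies the Jacobian criterion to the quotient. For generic $P$, and with $S$ cut out by the generic cubic $F=x_1x_3^2+x_2^2x_4+F_3$, the expectation is that the fibrewise cubic $\Psi$ has reduced zero scheme, so that $R\cap E=\{\Psi=0\}$ is smooth inside $E$ and $R$ meets $E$ transversally, forcing $R$ to be smooth.

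The hard part is exactly this transversality along $E$. The coefficients of $\Psi$ are assembled from the leading terms of $y_1,\dots,y_{n-1}$ along $S$, and they degenerate over the sublocus of $S$ where several of the sections $y_2,\dots,y_{n-1}$ vanish simultaneously on $E$ (e.g. the locus $x_2=0$, where $\bar Y_2,\dots,\bar Y_n$ all restrict to $0$ on $E$). Over such points the naive count for $R\cap E$ collapses and $R$ could \emph{a priori} acquire a singularity; controlling this is the crux. It is precisely here that the special monomials $x_1x_3^2+x_2^2x_4$ in $F$, together with the genericity of $F_3$ and of $P$, must be exploited to keep the tangent cone reduced and transverse to $E$ over \emph{all} of $S$. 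This is the step I expect to demand the most care, since everything else is either a direct consequence of $c\ne 0$ or of the smoothness statements already available on $\com^n$.
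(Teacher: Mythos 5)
Your first two steps are sound. The reduction to a neighbourhood of $E$ and the disjointness $R\cap H_V=\emptyset$ are correct, and the latter is in substance the paper's own argument: by Lemma~\ref{theorem:phi-is-bir-and-h} the divisor $R$ is pulled back from $Y$, every section of $3H^*-E$ is constant on the contracted $H_V\simeq\p^{n-1}$ (with $y_1\equiv 1$, $y_2\equiv\ldots\equiv y_n\equiv 0$ and the section lifting $1$ also $\equiv 0$ there), so the equation of $R$ restricts on $H_V$ to the constant $c=$ coefficient of $y_1^3$ in $P$, which is nonzero for generic $P$. Your write-up of this is actually more complete than the paper's two-line justification.

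The genuine gap is the one you flag yourself and then leave open: smoothness of $R$ along $E$, which is the entire content of the lemma. Moreover, this is not merely a laborious verification that you postponed --- the transversality you hope for actually fails at exactly the locus you single out. Work in the chart of $V$ where $E=(w=0)$ with fibre coordinate $\alpha$ (so $H_V$ misses this chart); after dividing by the local equation $w$ of $E$, the sections $y_1$, $y_2$, $y_j\ (3\le j\le n)$ and the section $e$ lifting $1$ (i.e. $w^3$) become $\alpha+x_1w$, $x_2w$, $x_2x_j$, $w^2$. Hence at any point $q\in E$ lying over $S\cap\{x_2=0\}$ with $\alpha=0$, \emph{all} of $y_1,\ldots,y_n,e$ vanish. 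The equation of $R$ is a cubic polynomial in precisely these sections, so every one of its monomials vanishes to order $\ge 3$ at $q$; therefore $q\in R$ and $R$ has multiplicity $\ge 3$, in particular is singular, at $q$ --- and this happens along the whole $(n-3)$-dimensional locus over $S\cap\{x_2=0\}$, for \emph{every} choice of $P$, so no genericity of $P$ or of $F_3$ can rescue the argument. Note also that this locus lies in $E\cap\bigcap_{i=1}^{n-1}(y_i=0)$ but \emph{off} $H_V$, whereas the paper's own proof simply asserts, without justification, that singularities on $E$ are confined to $E\cap\bigcap_{i=1}^{n-1}(y_i=0)\cap H_V$ and concludes from the emptiness of the latter. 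So you have correctly isolated the crux, but the missing step cannot be closed by a finer computation: as the construction stands (with these particular $y_i$, which are forced by the $\com^n$-structure of the double cover in {\ref{subsection:exa-1}}), the statement fails there, and any repair must modify the sections or the shape of the equation of $R$ rather than sharpen the transversality analysis.
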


\begin{proof}
After the blowup $\sigma$ the only singularities on $E$ that $R =
(P + w^2y_n + w^3 = 0)$ can have belong to the locus
$E\cap\displaystyle\bigcap_{i=1}^{n-1}(y_i = 0)\cap H_V$. But the
latter is empty by the choice of $y_i$. Hence $R$ is smooth. The
claim about $R \cap H_V$ follows from
Lemma~\ref{theorem:phi-is-bir-and-h} and the fact that
$y_i\big\vert_E \ne 0$ identically for $y_i \in |3H^* - E|$.
\end{proof}

Lemma~\ref{theorem:r-is-smooth} implies that $\tilde{V}$ is
smooth. Then so is $X$ (see Lemma~\ref{theorem:x-is-smooth}) and
on the open chart $\tilde{V} \setminus \pi^{-1}H_V \cup \tilde{E}
= X \setminus f_*\tilde{E}$ morphism $\pi: \tilde{V} \map V$
coincides with the projection of
$$
\tilde{V} \setminus \pi^{-1}H_V \cup \tilde{E} = (T^2 =
P(y_1,\ldots,y_{n-1}) + y_n + 1) \subset \com^{n+1}
$$
onto $\com^n = V \setminus H_V \cup E$ with affine coordinates
$y_i$. This yields $\tilde{V} \setminus \pi^{-1}H_V \cup
\tilde{E}\simeq\com^n$, if one takes $T,y_1,\ldots,y_{n-1}$ as
generators of the affine algebra $\com[\tilde{V} \setminus
\pi^{-1}H_V \cup \tilde{E}]$.

Finally, since the defining equation of $R\big\vert_E$ is
$P(y_1,\ldots,y_{n-1}) = 0$, with generic $P$, both cycles
$R\big\vert_E$ and $\tilde{E}$ are irreducible. Thus
$f_*\tilde{E}$ is also irreducible and so $X$ is the required
compactification of $\com^n$ (with the boundary divisor $\Gamma =
f_*\tilde{E}$). This completes the construction of $X$.

Theorem~\ref{theorem:main} now follows from the next

\begin{cor}
\label{theorem:-five-fold-not-r-u} $X$ is not uniformly rational.
\end{cor}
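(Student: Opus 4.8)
The plan is to obtain Corollary~\ref{theorem:-five-fold-not-r-u} as an immediate clash between the numerical obstruction of Section~\ref{section:proo} and the Seshadri computation just completed, so that almost no new work is required. First I would record that the $X$ built in this section meets every standing hypothesis of {\ref{subsection:proo-0}}: by Lemma~\ref{theorem:x-is-smooth} it is a smooth Fano $n$\,-\,fold; one has $\mathrm{Pic}\,X = \cel\cdot\mathcal{O}_X(\Gamma)$ with $\Gamma = f_*\tilde{E}$; subsection {\ref{subsection:exa-1}} shows that $U := X\setminus\Gamma\simeq\com^n$ and that $\Gamma$ is irreducible; and $X\ne\p^n$, since its Fano index equals $n-3$. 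Consequently both Corollary~\ref{theorem:cor-prop-on-mults} and Corollary~\ref{theorem:small-deg-curves-on-x-cor} apply to the \emph{same} ample generator $\mathcal{O}_X(\Gamma)$, so the two occurrences of $s(\cdot)$ refer to one and the same function in the sense of the convention of {\ref{subsection:intro-1}}.

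I would then argue by contradiction. Suppose $X$ is uniformly rational. Corollary~\ref{theorem:cor-prop-on-mults} (whose input point $o$ of Proposition~\ref{theorem:prop-on-mults} lies in $U$, with its partner $p = p(o)\in\Gamma$) produces points $o\in U$ and $p\in\Gamma$ with
$$
s(p)\ \ge\ s(o).
$$
On the other hand, Corollary~\ref{theorem:small-deg-curves-on-x-cor} evaluates $s(\cdot)$ explicitly on the two strata: since $p\in\Gamma = f_*\tilde{E}$ we have $s(p) = 1$, whereas $o\in U = X\setminus\Gamma$ forces $s(o)\ge 2$. Substituting these into the displayed inequality yields $1 = s(p)\ge s(o)\ge 2$, which is absurd. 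Hence $X$ is not uniformly rational, and Theorem~\ref{theorem:main} follows for every $n\ge 4$.

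The honest assessment is that no genuine obstacle remains at this final step: all the difficulty was invested upstream, in the construction of $X$ via Lemmas~\ref{theorem:phi-is-bir-and-h}--\ref{theorem:r-is-smooth} and in the two Seshadri estimates $s|_{\Gamma} = 1$ and $s|_{U}\ge 2$. The only points demanding care are bookkeeping ones: verifying that the polarizations in the two corollaries coincide (which I address above), and confirming that the irreducibility of $\Gamma$\,---\,used decisively throughout Section~\ref{section:proo} and established in {\ref{subsection:exa-1}}\,---\,is indeed in force for this $X$. Granting these, the corollary is the one\,-\,line combination of the two corollaries foreshadowed in {\ref{subsection:intro-1}}.
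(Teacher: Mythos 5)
Your proof is correct and follows exactly the paper's own argument: verify that $X$ satisfies the hypotheses of Section~2, assume $X$ is uniformly rational, and derive the contradiction $1 = s(p) \ge s(o) \ge 2$ by combining Corollary~2.8 with Corollary~3.6. The additional bookkeeping you include (matching polarizations, $X\ne\p^n$, irreducibility of $\Gamma$) is implicit in the paper's one-line remark that ``$X$ satisfies all the assumptions of Section~2,'' so your write-up is simply a slightly more explicit version of the same proof.
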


\begin{proof}
Notice first that $X$ satisfies all the assumptions of
Section~\ref{section:proo}. Then
Corollary~\ref{theorem:cor-prop-on-mults} applies to $X$ once we
assume the latter to be u.\,r. We obtain $s(p) \ge s(o)$ for some
$p\in f_*\tilde{E}$ and $o \in X\setminus f_*\tilde{E}$. At the
same time, Corollary~\ref{theorem:small-deg-curves-on-x-cor} gives
$s(p) = 1$ and $s(o) \ge 2$, a contradiction. Hence $X$ can not be
u.\,r.
\end{proof}

\bigskip

\end{document}